\documentclass[11pt]{amsart}

\usepackage{amssymb,amsmath,amsfonts,amstext,latexsym}
\usepackage[all]{xy}
\usepackage[colorlinks=true, urlcolor=rltblue, citecolor=drkgreen, linkcolor=drkred] {hyperref}
\usepackage{color}
\usepackage{pdfsync}
\usepackage{enumitem}
\definecolor{rltblue}{rgb}{0,0,0.4}
\definecolor{drkgreen}{rgb}{0,0.4,0}
\definecolor{drkred}{rgb}{0.5,0,0}


\newtheorem{thm}{Theorem}
\newtheorem{lemma}[thm]{Lemma}
\newtheorem{proposition}[thm]{Proposition}

\newtheorem{theorem}[thm]{Theorem}
\newtheorem{corollary}[thm]{Corollary}

\theoremstyle{definition}
\newtheorem{definition}[thm]{Definition}

\theoremstyle{remark}

\newtheorem{historic}[thm]{Historic Remark}

\theoremstyle{plain}



\newcounter{contenumi}


\def\D{{\mathcal D}}

\def\upto{\mathop{\upharpoonright}}

\def\and{\mathrel{\&}}

\def\sminus{\smallsetminus}

\def\isom{\cong}
\def\Si{\Sigma}


\def\A{\mathcal{A}}

\def\B{\mathcal{B}}
\def\C{\mathcal{C}}
\def\I{\mathcal{I}}

\def\F{\mathcal{F}}

\def\om{\omega}
\def\bbar{\bar{b}}

\def\si{\sigma}








 







\newcommand \converges{\downarrow}


\def\A{\mathcal A}

\def\xbar{{\bar{x}}}

\def\cbar{{\bar{c}}}

\def\abar{{\bar{a}}}
\def\bbar{{\bar{b}}}
\def\xbar{{\bar{x}}}
\def\dbar{{\bar{d}}}
\def\ctt{{\mathtt c}}

\def\Sic{\Si^\ctt}
\def\Dec{\Delta^\ctt}
\def\Pic{\Pi^\ctt}

\def\CC{{\mathfrak C}}

\def\Ahat{\hat{\A}}


\def\Si{\Sigma}

\def\om{\omega}

\def\Bbar{\bar{B}}

\def\HF{\mathbb{HF}}




 \def\Ahat{\hat{\A}}
 \def\Bhat{\hat{\B}}

\def\Do{\mathcal Dom}

\newcounter{been}

\newcounter{eebeen}


\def\tauti{\tilde{\tau}}

\def\DgSp{DgSp}


\newcommand{\Iso}[1]{\text{Iso}(#1)}
\newcommand{\DD}{\mathfrak{D}}
\renewcommand{\Ahat}{\widehat{\mathcal{A}}}
\renewcommand{\Bhat}{\widehat{\mathcal{B}}}

\newcommand{\Atilde}{\widetilde{\mathcal{A}}}
\newcommand{\Btilde}{\widetilde{\mathcal{B}}}

\newcommand{\ghat}{\widehat{g}}

\newcommand{\comment}[1]{}
\newcommand{\res}{\upto}
\DeclareMathOperator{\id}{id}
\newcommand{\Dom}[2]{\Do_{#2}^{#1}}

\def\Fra{\mathfrak{F}}




\title{Computable Functors and Effective Interpretability}

\author[M. Harrison-Trainor]{Matthew Harrison-Trainor}
\address{Group in Logic and the Methodology of Science\\
University of California, Berkeley\\
 USA}
\email{matthew.h-t@math.berkeley.edu}
\urladdr{\href{http://math.berkeley.edu/~mattht}{http://math.berkeley.edu/$^\sim$mattht}}

\author[A. Melnikov]{Alexander Melnikov}
\address{The Institute of Natural and Mathematical Sciences\\
Massey University\\
 New Zealand}
\email{alexander.g.melnikov@gmail.com}
\urladdr{\href{https://dl.dropboxusercontent.com/u/4752353/Homepage/index.html}{https://dl.dropboxusercontent.com/u/4752353/Homepage/index.html}}

\author[R. Miller]{Russell Miller}
\address{Mathematics Dept., Queens College; Ph.D.\ Programs in Mathematics \& Computer Science, Graduate Center\\
City University of New York\\
USA}
\email{Russell.Miller@qc.cuny.edu}
\urladdr{\href{http://qcpages.qc.cuny.edu/~rmiller}{http://qcpages.qc.cuny.edu/$^\sim$rmiller}}

\author[A. Montalb\'an]{Antonio Montalb\'an}
\address{Department of Mathematics\\
University of California, Berkeley\\
	USA}
\email{antonio@math.berkeley.edu}
\urladdr{\href{http://www.math.berkeley.edu/~antonio/index.html}{www.math.berkeley.edu/$^\sim$antonio}}

\thanks{The first author was partially supported by the Berkeley Fellowship and NSERC grant PGSD3-454386-2014.
The second author was supported by the Packard Foundation.
The third author was supported by NSF grants \# DMS-1362206 and DMS-1001306, and by several PSC-CUNY research awards.
The fourth author was partially supported by the Packard Fellowship and NSF grant \# DMS-1363310.
This work took place in part at a workshop held by the Institute for Mathematical Sciences of the National University of Singapore.}


\begin{document}

\begin{abstract}
Our main result is the equivalence of two notions of reducibility between structures.
One is a syntactical notion which is an effective version of interpretability as in model theory, and the other one is a computational notion which is a strengthening of the well-known Medvedev reducibility.
We extend our result to effective bi-interpretability and also to effective reductions between classes of structures.\end{abstract}

\maketitle


%

\section{Introduction}

The main purpose of this paper is to establish a connection between two standard methods of computable structure theory for reducing one structure into another one.
One of this methods, effectively interpretability (Definition \ref{def: eff int}), is purely syntactical and is an effective version of the classical notion of interpretability in model theory .
It is equivalent to the well-studied notion of $\Sigma$-reducibility.
The other method is purely computational, and it involves computing copies of one structure from copies of the other using what we will call computable functors (Definition \ref{def:functors}).

In computable structure theory we study complexity issues related to mathematical structures.
One of the objectives of the subject is to measure the complexity of structures.
There are three commonly used methods to compare the complexity of structures: Muchnik reducibility, Medvedev reducibility and $\Sigma$-reducibility. 
The first two are computational, in the sense that they are about copies of a structure computing other copies; while the third one is purely syntactical.
They are listed from weakest to strongest and none of the implications reverse (as proved by Kalimullin \cite{Kal09}).

\subsection*{Effective interpretability} 

Informally, a structure $\A$ is {\em effectively interpretable} in a structure $\B$ if there is an interpretation of $\A$ in $\B$ (as in model theory \cite[Definition 1.3.9]{MarkerBook}), but where the domain of the interpretation is allowed to be a subset of $\B^{<\om}$ (while in the classical definition it is required to be a subset of $\B^n$ for some $n$), and where all sets in the interpretation are required to be ``computable within the structure'' (while in the classical definition they need to be first-order definable).\footnote{We remark that this definition is slightly different from what the fourth author called effective-interpretability in \cite[Definition 1.7]{MonFixed}, as we now allow the domain to be a subset of $\B^{<\om}$ rather than $\B^n$ for some $n$, and we do not allow parameters in the definitions.}
Here, by  ``computable within the structure''  we mean uniformly relatively intrinsically computable (see Definition \ref{def:rice}).
Effective interpretability is among the strongest notions of reducibility between structures that are usually considered.
It gives a very concrete way of producing the structure $\A$ from the structure $\B$,
and hence implies that essentially any kind of information encoded in $\A$ is also encoded in $\B$.

Effective interpretability is equivalent to the parameterless version of the notion of $\Si$-definability, introduced by Ershov \cite{Ershov96} and widely studied in Russia over the last twenty years (for instance \cite{Puz09, StuEMU, MK08, Kal09}).
The standard definition of $\Si$-definability is quite different in format: it uses  the first-order logic over $\HF(\B)$, the structure of hereditarily finite sets over $\B$,  instead of the computably infinitary language over  $\B^{<\om}$.
For a more detailed discussion of the equivalence between effective interpretability and $\Sigma$-definability see \cite[Section 4]{MonRice}.

Before giving the formal definition, we need to review one more concept. 

\begin{definition}\label{def:rice}
A relation $R$ on $\A^{<\om}$ is said to be {\em uniformly relatively intrinsically computably enumerable (u.r.i.c.e.)} if there is a c.e.\ operator $W$ such that for every copy $(\B,R^\B)$ of $(\A,R)$, $R^\B= W^{D(\B)}$.
A relation $R$ on $\A^{<\om}$ is said to be {\em uniformly relatively intrinsically computable (u.r.i.\ computable)} if there is a computable operator $\Psi$ such that for every copy $(\B,R^\B)$ of $(\A,R)$, $R^\B= \Psi^{D(\B)}$.

(Here  $D(\B)$ referee to the atomic diagram of $\B$; it is an infinite binary sequence that encodes the truth of all the atomic facts about $\Bhat$.  See \cite[Section 2]{MonRice}, for instance, for a formal definition).
\end{definition}

These relations are the analogues of the c.e.\ and computable subsets of $\om$ when we look at relations on a structure.
They are computability theoretic notions, but they can be characterized in purely syntactical terms:
It follows from the results in Ash, Knight, Manasse and Slaman \cite{AKMS89}, and Chisholm \cite{Chi90} that a relation $R$ is  u.r.i.c.e.\ if and only if it can be defined by a computably infinitary $\Si_1$ formula without parameters;
a relation $R$ is u.r.i.\ computable if both it and its complement can be defined by computably infinitary $\Si_1$ formulas without parameters.
(We will use $\Sic_1$ to denote the computably infinitary $\Si_1$ formulas, and the same for $\Dec_1$, $\Pic_1$, etc.)
These theorems were originally proved for $R\subseteq \A^n$ for some $n$, but they also hold for $R\subseteq \A^{<\om}$ (see \cite[Theorem 3.14]{MonRice}).
In this latter case, we say that $R$ is $\Sic_1$-definable if there is a computable list $\varphi_1,\varphi_2, \varphi_3,...$ of $\Sic_1$ formulas defining  $R\cap \A^1$, $R\cap \A^2$, $R\cap \A^3$,... respectively.
The use of $\A^{<\om}$ is not just to be able to take  subsets of the different $A^n$ at the same time.
Traditionally, computability theory is usually developed by considering subsets of $\om$ and this is workable because every finite object can be coded by a natural number.
In the same way,
when we are talking about computability over a structure, $\A^{<\om}$ is the simplest domain where we can develop computability without losing generality.
For instance, it is not hard to see that we can easily encode subsets of $(\A^{<\om})\times\om$ by subsets of $\A^{<\om}$ in an effective way\footnote{For example, $(b_0, \dots, b_k, m)$ can be coded by the definable class of tuples of the form $(b', b_0, \ldots, b_k, b', \ldots ($m  times$) \ldots, b')$ where $b' \neq b_i$ for all $i$. Different choices of $b'$ will code the same tuple, but we can identify all such codes later when we introduce a definable equivalence relation upon the domain.} so that we can talk about r.i.c.e.\ subsets of $(\A^{<\om})\times\om$, etc.
Thus, we say that a sequence of relations $(R_i:i\in\om)$ where $R_i\subseteq\A^{<\om}$ is r.i.c.e.\ or $\Sic_1$-definable if it is as a subset of $(\A^{<\om})\times\om$.

Throughout the rest of the paper, we assume that all our structures have a computable language. Without loss of generality, we may further assume that all  languages considered are relational.

\begin{definition}\label{def: eff int}
We say that a structure $\A = (A; P_0^\A,P_1^\A,...)$ (where $P_i^\A\subseteq A^{a(i)}$) is {\em effectively interpretable} in $\B$ if there exist a $\Dec_1$-definable (in the language of $\B$, without parameters) sequence of relations $(\Dom{\A}{\B}, \sim, R_0, R_1,...)$ such that
\begin{enumerate}
	\item $\Dom{\B}{\A}\subseteq \B^{<\om}$,
	\item $\sim$  is an equivalence relation on $\Dom{\B}{\A}$, 
	\item $R_i\subseteq (B^{<\om})^{a(i)}$ is closed under $\sim$ within $\Do_\A^\B$,
\end{enumerate}
and there exists a function $f^\B_\A\colon \Dom{\B}{\A} \to \A$ which induces an isomorphism: 
\[
(\Dom{\B}{\A}/\sim; R_0/\sim,R_1/\sim,...) \isom (A; P_0^\A,P_1^\A,...),
\]
where $R_i/ \sim$ stands for the $\sim$-collapse of $R_i$.\footnote{
In previous definitions in the literature, $\Dom{\B}{\A}$ was asked to be $\Sic_1$ definable instead of $\Dec_1$ definable (see for instance \cite[Definition 1.7]{MonFixed} and \cite[Definition 5.1]{MonICM}).
But in fact one can demonstrate these definitions  are equivalent.
Indeed,  given a $\Sigma$-interpretation, with the domain consisting of the tuples $\bar{x}$ satisfying a countable disjunction of formulas $\exists \bar{s} \varphi_i(\bar{x},\bar{s})$, we find a new domain consisting of the tuples $(\bar{x},\bar{s},i)$  with $(\bar{x},\bar{s})$ satisfying $\varphi_i$, and we have $(\bar{x},\bar{s},i)$ equivalent to $(\bar{y},\bar{t},j)$ iff $\bar{x}$ and $\bar{y}$ are equivalent in the $\Sigma$-interpretation.
}
\end{definition}

As important as the notions of reducibility between structures are the notions of equivalence between structures.
Despite extensive study of effective interpretability, or $\Sigma$-definability, over the last couple of decades, the associated notion of bi-interpretability has not been considered until recently \cite[Definition 5.2]{MonICM}.
Let us remark that the notion of $\Sigma$-equivalence between structures, which says that two structures are $\Sigma$-definable in each other, has been studied (\cite{StuEMU}), but the notion of bi-interpretability we are talking about is much stronger.
Informally:
two structures $\A$ and $\B$ are {\em effectively bi-interpretable} if they are effectively interpretable in each other, and furthermore, the compositions of the interpretations are $\Dec_1$-definable in the respective structures.
In other words, when two structures interpret each other, we have that $\A$ can be interpreted as a structure inside $\B^{<\om}$, and that $\B^{<\om}$ can be interpreted as a structure inside $(\A^{<\om})^{<\om}$.
Thus, we have an interpretation of $\A$ inside $(\A^{<\om})^{<\om}$.
For bi-interpretability, we require that the isomorphism between $\A$ and its interpretation inside $(\A^{<\om})^{<\om}$ be $\Dec_1$-definable, and the same for the isomorphism between $\B$ and its interpretation inside $(\B^{<\om})^{<\om}$.

\begin{definition}
\label{defn:eff-biinterpretable}
Two structures $\A$ and $\B$ are {\em effectively bi-interpretable} if there are effective
interpretations of each structure in the other as in Definition \ref{def: eff int} such that the compositions
\[
f^\A_\B \circ \tilde{f}^\B_\A\colon \Do_\B^{(\Do_\A^\B)} \to \B 
\quad\mbox{ and }\quad
f^\B_\A \circ \tilde{f}^\A_\B\colon \Do_\A^{(\Do_\B^\A)} \to \A 
\]
are  u.r.i.\ computable in $\B$ and $\A$ respectively.
(Here $\Do_\B^{(\Do_\A^\B)}\subseteq (\Do_\A^\B)^{<\om}$, and $\tilde{f}^\B_\A\colon (\Do_\A^\B)^{<\om}\to \A^{<\om}$ is the obvious extension of $f^\B_\A\colon \Do_\A^\B\to \A$ mapping $\Do_\B^{(\Do_\A^\B)}$ to $\Do_\B^\A$.)
\end{definition}

When two structures are effectively bi-interpretable, they look and feel the same from a computability point of view.
In \cite[Lemma 5.3]{MonICM} the fourth author shows that if $\A$ and $\B$ are effectively bi-interpretable then: 
they have the same degree spectrum;
they have the same computable dimension;
they have the same Scott rank;
their index sets are Turing equivalent (assuming the structures are infinite);
$\A$ is computably categorical if and only if $\B$ is;
$\A$ is rigid if and only if $\B$ is;
$\A$ has the  c.e.\ extendability condition if and only if $\B$ does;
for every $R\subseteq\A^{<\om}$, there is a $Q\subseteq\B^{<\om}$ which has the same relational degree spectrum, and vice-versa;
and the jumps of $\A$ and $\B$ are effectively bi-interpretable too.

\subsection*{Computable functors}

One of the most common ways of describing the computational complexity of a structure is by its degree spectrum.
Associated with the degree spectrum is the notion of Muchnik reducibility:
A structure $\A$ is {\em Muchnik reducible} to a structure $\B$ if every copy of $\B$ computes a copy of $\A$, or (equivalently for non-trivial structures) if $\DgSp(\A)\subseteq\DgSp(\B)$.
The uniform version of this reducibility is called Medvedev reducibility:
A structure $\A$ is {\em Medvedev reducible} to a structure $\B$ if there is a Turing functional $\Phi$ that, given a copy of $\B$ as an oracle, outputs a copy $\Phi^\B$ of $\A$.
It is easy to see that if $\A$ is effectively interpretable in $\B$,  we can use the interpretation to build a Turing functional giving a Medvedev reduction from $\B$ to $\A$.
Kalimullin \cite{Kal09} showed that this implication cannot be reversed.
In this paper we consider a strengthening of Medvedev reducibility that is equivalent to effective interpretability. 
This strengthening comes from asking the Turing functional $\Phi$ to preserve isomorphisms in the following sense.
Given an isomorphism between two copies of $\B$, we want an effective way to compute an isomorphism between the two copies of $\A$ that we get by applying $\Phi$. 
We will define this more precisely using the language of category theory.

Throughout the paper, 
we write $\Iso{\A}$ for the isomorphism class of a countably infinite structure $\A$:
$$ \Iso{\A} = \{\Ahat~:~\Ahat\cong\A~\&~\text{dom}(\Ahat)=\omega\}.$$
We will regard $\Iso{\A}$ as a category, with the copies of the structures as its objects and the isomorphisms among them as its morphisms.

\begin{definition}\label{def:functors}
By a {\em functor from $\A$ to $\B$} we mean a functor from $\Iso{\A}$ to  $\Iso{\B}$, that is, a map $F$ that assigns to each copy $\Ahat$ in $\Iso{A}$ a structure $F(\Ahat)$ in $\Iso{\B}$, and assigns to each morphism $f\colon \Ahat \to \Atilde$ in $\Iso{\A}$ a morphism $F(f) \colon F(\Ahat) \to F(\Atilde)$ in $\Iso{\B}$ so that the  two properties hold below:
	\begin{enumerate}
		\item[(N1)]	$F(\id_{\Ahat}) = \id_{F(\Ahat)}$ for every $\Ahat \in \Iso{\A}$, and
		\item[(N2)] $F(f \circ g) = F(f) \circ F(g)$ for all morphisms $f,g$ in $\Iso{\A}$.
	\end{enumerate}

 A  functor $F\colon \Iso{\A} \rightarrow \Iso{\B}$ is {\em computable} if
there exist two computable operators $\Phi$ and $\Phi_*$ such that
\begin{enumerate}
\item[(C1)] for every $\Ahat \in \Iso{\A}$, $\Phi^{D(\Ahat)}$ is the atomic diagram of $F(\Ahat) \in \Iso{\B}$;
\item[(C2)] for every morphism $f:\Ahat\to\Atilde$ in $\Iso{\A}$,
$\Phi_*^{D(\Ahat)\oplus f\oplus D(\Atilde)}  = F(f). $
\end{enumerate}
Recall that $D(\Ahat)$ denotes the atomic diagram of $\Ahat$.
We will often identify a computable functor with the pair $(\Phi, \Phi_*)$ of Turing operators witnessing its computability. 
\end{definition}

Notice that $\Phi$, without $\Phi_*$, gives a Medvedev reduction from $\Iso{\A}$ to $\Iso{\B}$.
From the examples in the literature of Medvedev reducibilities, some turn out to be effective functors,
but not all.

Our first main result connects computable functors and effective interpretability.

\begin{theorem}
\label{thm:basicequiv}
Let $\A$ and $\B$ be countable structures. Then
$\A$ is effectively interpretable in $\B$ if and only if
there exists a computable functor from $\B$ to $\A$.
\end{theorem}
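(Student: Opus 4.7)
The plan is to prove both directions by leveraging the paper's characterization of $\Dec_1$-definability as uniform relative intrinsic (u.r.i.)\ computability, via the Ash--Knight--Manasse--Slaman and Chisholm theorems cited in the excerpt.

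For the easier direction, given the $\Dec_1$-definable data $(\Dom{\B}{\A},\sim,R_i)_{i\in\om}$ of an effective interpretation, I would build the functor as follows. For each $\Bhat\in\Iso{\B}$, the u.r.i.\ computability lets one enumerate, computably in $D(\Bhat)$, a canonical list $(\bar b^k)_{k\in\om}$ of $\sim$-representatives of $\Dom{\B}{\A}(\Bhat)$; the bijection $k\mapsto[\bar b^k]_\sim$ then pulls the $R_i$ back to relations on $\om$, giving $F(\Bhat)\in\Iso{\A}$. For an isomorphism $f\colon\Bhat\to\Btilde$, define $F(f)(k)$ by applying $f$ tuple-wise to $\bar b^k$ and reading off which $\Btilde$-representative lies in its $\sim$-class (using that u.r.i.\ computable relations are isomorphism-invariant). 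The functor laws (N1), (N2) are immediate from tuple-wise application respecting identity and composition, and (C1), (C2) from u.r.i.\ computability.

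For the harder direction, given a computable functor $F=(\Phi,\Phi_*)$, I would declare the interpretation domain to be
$$\Dom{\B}{\A}=\{(b_0,\ldots,b_{n-1},c)\in\B^{<\om}:\ n\geq 0,\ b_0,\ldots,b_{n-1}\text{ pairwise distinct}\},$$
which is first-order and hence $\Dec_1$. For a copy $\Bhat\in\Iso{\B}$ and a tuple $(\bar b,c)\in\Dom{\B}{\A}(\Bhat)$, let $\pi_{\bar b}\colon\om\to\om$ be the unique permutation sending $b_i\mapsto i$ for $i<n$ and order-preserving on the complement, and let $\Bhat_{\bar b}$ be the pushforward copy of $\B$ (so that $\pi_{\bar b}\colon\Bhat\to\Bhat_{\bar b}$ is an isomorphism in $\Iso{\B}$). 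Associate $(\bar b,c)$ with the element $F(\pi_{\bar b})^{-1}(\pi_{\bar b}(c))\in F(\Bhat)$; this is u.r.i.\ computable in $\Bhat$ because $D(\Bhat_{\bar b})$ and $\pi_{\bar b}^{-1}$ are computable from $D(\Bhat)$ and $\bar b$, and functoriality yields $F(\pi_{\bar b})^{-1}=F(\pi_{\bar b}^{-1})$, which is delivered by $\Phi_*$ on oracle $D(\Bhat_{\bar b})\oplus\pi_{\bar b}^{-1}\oplus D(\Bhat)$. Define $\sim$ and the $R_i$ as the pullbacks of equality and of $P_i^\A$ along this association; they are u.r.i.\ computable, hence $\Dec_1$-definable.

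Surjectivity of the induced map onto $F(\Bhat)\cong\A$ falls out of the degenerate case $n=0$: then $\pi_{()}=\id$ and (N1) gives $F(\pi_{()})=\id_{F(\Bhat)}$, so the length-one tuple $(c)$ maps to $c$ itself, meaning every element of $F(\Bhat)=\om$ is represented. The main obstacle I anticipate is the bookkeeping needed to verify that the construction produces a single computable list of $\Dec_1$ formulas uniformly across all tuple lengths, as required by the paper's convention for relations on $\B^{<\om}$; having the marker element $c$ always at the end of the tuple, together with the first-order condition on $\bar b$, makes this manageable. The one nontrivial categorical input is the identity $F(\pi_{\bar b}^{-1})=F(\pi_{\bar b})^{-1}$, which lets $\Phi_*$ be invoked on $\pi_{\bar b}^{-1}$ directly rather than forcing us to numerically invert an already-computed bijection.
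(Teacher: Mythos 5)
Your forward direction is essentially the paper's: enumerate $\sim$-representatives of the domain computably from the diagram, pull the relations back to $\om$, and push isomorphisms through tuple-wise. That part is fine.

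The backward direction has a genuine gap. You take $\Dom{\B}{\A}$ to be \emph{all} tuples $(\bar b,c)$ with $\bar b$ distinct, attach to $(\bar b,c)$ the element $F(\pi_{\bar b}^{-1})(\pi_{\bar b}(c))\in F(\Bhat)$, and argue this is u.r.i.\ computable because it is computable from $D(\Bhat)$ and $\bar b$. But computability from $D(\Bhat)$ is not u.r.i.\ computability: the latter in particular forces invariance under automorphisms of $\B$, and your attachment is not invariant. The permutation $\pi_{\bar b}$ depends on how the given presentation orders the complement of $\bar b$, and $F(\pi_{\bar b}^{-1})=\Phi_*^{D(\Bhat_{\bar b})\oplus\pi_{\bar b}^{-1}\oplus D(\Bhat)}$ consults the \emph{entire} diagram, not a finite fragment determined by $\bar b$; so the element you assign to $(\bar b,c)$ is not a function of its quantifier-free type. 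For a concrete failure, let $\B$ be $\om$ with one unary $U$ equal to the evens, let $\A$ be an infinite pure set, and let $F$ be the computable functor sending $\Bhat$ to $\om$ with no structure, indexed by the increasing enumeration of $U^{\Bhat}$, with $F(g)$ the induced map on indices. Writing $\phi$ for your association on $\B$, one computes $\phi((1),1)=0=\phi((),0)$, so $((1),1)\sim((),0)$; but the automorphism of $\B$ transposing $0$ and $2$ fixes $((1),1)$ and sends $((),0)$ to $((),2)$, while $\phi((),2)=2\ne 0$, so $((1),1)\not\sim((),2)$. Thus your $\sim$ is not automorphism-invariant, hence not u.r.i.\ computable, hence not $\Dec_1$-definable.

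The missing idea is exactly the paper's local-determination condition on the domain: a pair $(\bar b,i)$ is admitted into $\Dom{\B}{\A}$ only when $\Phi_*^{D(\bar b)\oplus\lambda\res|\bar b|\oplus D(\bar b)}(i)\downarrow=i$, that is, when the \emph{finite} oracle $D(\bar b)$ already commits $\Phi_*$ to fixing $i$; and $\sim$ is likewise defined by convergence of $\Phi_*$ on finite fragments of the diagram. This forces all of $\Dom{\B}{\A}$, $\sim$, and the $R_i$ to depend only on quantifier-free data about the tuple, which is what yields automorphism-invariance and $\Dec_1$-definability. Once the domain is cut down this way, the free surjectivity you get from $n=0$ disappears and must be reestablished, and most of the paper's argument is spent showing that the restricted $\sim$ is still a $\Dec_1$-definable equivalence relation that induces an isomorphism onto $\A$.
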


We prove Theorem~\ref{thm:basicequiv} in Section~\ref{sec:effective}. It is well-known in model theory that an elementary first-order interpretation of one structure in another gives rise to a functor. One can find a treatment of this fact in the book by Hodges \cite[pp. 216--218]{Hodges93}. The corresponding direction in Theorem~\ref{thm:basicequiv}---from left to right---is rather straightforward, and the only new thing is to consider the effectiveness of the functor. The interesting direction is to build an interpretation out of a functor.

Our proof of Theorem~\ref{thm:basicequiv} not only shows the existence of such an interpretation, but actually it builds a correspondence between functors and interpretations.
This last observation, which we will discuss in Proposition \ref{prop:kuku} and Section \ref{se:uniqueness}, is quite important.  For instance, when $\A$ has a computable copy Theorem~\ref{thm:basicequiv} is trivial and Proposition \ref{prop:kuku} is still meaningful: in this case we always have an effective interpretation of $\A$ into $\B$ which ignores the structure in $\B$, and also a functor from $\B$ to $\A$ that always outputs the same computable copy of $\A$ and the identity isomorphism on it without consulting the oracle.

Let us now explain how is that Proposition \ref{prop:kuku} extends Theorem~\ref{thm:basicequiv}.
Suppose we have a computable functor $F\colon \Iso{\B} \rightarrow \Iso{\A}$ whose effectiveness is witnessed by $(\Phi, \Phi_*)$. 
The backward direction of Theorem~\ref{thm:basicequiv} says that $\A$ must be effectively interpretable in $\B$. 
Applying the forward direction of Theorem~\ref{thm:basicequiv} to this effective interpretation, we get a computable functor based on this interpretation, denoting this new functor by $\mathcal{I}^F$ (here $\mathcal{I}$ stands for `interpretation'). 
We will show that these functors are isomorphic even in an effective way. The appropriate notion of equivalence is the following.

\begin{definition}\label{def:effeq}
A functor $F\colon \Iso{\B} \rightarrow \Iso{\A}$ is {\em effectively naturally isomorphic} (or just {\em effectively isomorphic}) to a functor $G\colon \Iso{\B} \rightarrow \Iso{\A}$   if there is a computable Turing functional $\Lambda$ such that for every $\Btilde \in \Iso{\B}$, $\Lambda^{\Btilde}$ is an isomorphism from $F(\Btilde)$ to $G(\Btilde)$, and the following diagram commutes for every $\Btilde, \Bhat \in \Iso{\B}$ and every morphism $h\colon \Btilde \to \Bhat$:
\[
\xymatrix{
F(\Btilde)\ar[d]_{F(h)}\ar[r]^{\Lambda^{\Btilde}} &      G(\Btilde)\ar[d]^{G(h)}   \\
F(\Bhat)\ar[r]_{\Lambda^{\Bhat}}    & G(\Bhat)
}\]
\end{definition}

\begin{proposition}\label{prop:kuku}
Let $F\colon \Iso{\B} \to \Iso{\A}$ be a computable functor. Then $F$ and $\mathcal{I}^F$ (defined above) are effectively isomorphic.
\end{proposition}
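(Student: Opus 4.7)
The plan is to unpack the correspondence produced by the backward direction of Theorem~\ref{thm:basicequiv} and exhibit $\Lambda$ as its canonical witness. From $F=(\Phi,\Phi_*)$, that construction builds an interpretation whose domain $\Dom{\B}{\A}$ consists of tuples $\bar{b}\in\B^{<\om}$ that code elements of $F(\B)$ via a map $\bar{b}\mapsto n_{\bar{b}}\in\om$, with $\bar{b}\sim\bar{b}'$ iff $n_{\bar{b}}=n_{\bar{b}'}$ and each $R_i$ defined as the pullback of $P_i^{F(\B)}$ along this coding. Since $n_{\bar{b}}$ is read off by applying $\Phi_*$ to the atomic data on $\bar{b}$, it is preserved by isomorphisms between copies of $\B$; this is the crucial ``locality'' I will exploit. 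The functor $\mathcal{I}^F$ then produces $\mathcal{I}^F(\Btilde)$ by enumerating $\Dom{\Btilde}{\A}/{\sim}$ on $\om$ through a fixed computable listing, and sends $h\colon \Btilde \to \Bhat$ to the bijection on classes induced by applying $h$ tuplewise to representatives.

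I will define $\Lambda^{\Btilde}\colon F(\Btilde)\to\mathcal{I}^F(\Btilde)$ by sending $n\in\dom(F(\Btilde))=\om$ to the listing-index of $[\bar{b}]_\sim$ for any $\bar{b}\in\Dom{\Btilde}{\A}$ with $n_{\bar{b}}=n$. This is computable uniformly in $\Btilde$: search tuples in $\Btilde^{<\om}$, evaluate the coding function (which is computable from $D(\Btilde)$ by construction), and return the listing-index when a match is found. Well-definedness, bijectivity, and preservation of each $P_i^\A$ are immediate from the construction of the interpretation, since the relations $R_i$ on $\Dom{\Btilde}{\A}$ were defined as the pullbacks of $P_i^{F(\Btilde)}$ under $\bar{b}\mapsto n_{\bar{b}}$.

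For naturality, fix $h\colon\Btilde\to\Bhat$ in $\Iso{\B}$ and $n\in F(\Btilde)$ with witness $\bar{b}$, so $\Lambda^{\Btilde}(n)=[\bar{b}]_\sim$. On one side of the square, $\mathcal{I}^F(h)$ sends $[\bar{b}]_\sim$ to $[h(\bar{b})]_\sim$ by definition. On the other, clause~(C2) gives $F(h)(n)=\Phi_*^{D(\Btilde)\oplus h\oplus D(\Bhat)}(n)$, and the locality of the coding together with (N1)--(N2) forces $n_{h(\bar{b})}$ computed inside $\Bhat$ to equal $F(h)(n)$; hence $\Lambda^{\Bhat}(F(h)(n))=[h(\bar{b})]_\sim$ as well, and the diagram commutes. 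The main obstacle is precisely the identity $n_{h(\bar{b})}=F(h)(n_{\bar{b}})$; it becomes transparent once one pins down a concrete definition of the coding $\bar{b}\mapsto n_{\bar{b}}$ from the backward direction of Theorem~\ref{thm:basicequiv}---for example, by extending $\bar{b}$ to a full enumeration of $\Btilde$ and reading off $\Phi_*$ applied to the identity isomorphism---after which (C2) and functoriality combine to yield the equation directly.
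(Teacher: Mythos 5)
Your proposal is correct and takes essentially the same approach as the paper: $\Lambda^{\Btilde}$ is defined as $(\tau^{\Btilde})^{-1}\circ\Fra^{\Btilde}$ (in your terms, the listing-index of $[\bar b]_\sim$ for a witness $\bar b$ of $n$), and the naturality check reduces exactly to your identity $n_{h(\bar b)}=F(h)(n_{\bar b})$, which the paper verifies by a use-principle argument (extend the tuples so the finite oracle $D_{\Btilde}(\bar a)\oplus h\!\res\!|\bar a|\oplus D_{\Bhat}(\bar b)$ captures the convergent computation, then use $D_{\Btilde}(\bar a)=D_{\Bhat}(h(\bar a))$). One small imprecision: the domain of the interpretation produced in the proof of Theorem~\ref{thm:basicequiv} consists of \emph{pairs} $(\bar b,i)$, not tuples $\bar b$, so your $n_{\bar b}$ should really be read as the index associated to such a pair via the equivalence $\sim$ and Lemmas~\ref{lem:initial-segment}--\ref{lem:intial-segments-agreeing}; this does not affect the argument.
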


We prove Proposition \ref{prop:kuku} in Section  \ref{se:uniqueness}.

Suppose that $F$ and $G$ are functors, and $F \circ G$ and $G \circ F$ are effectively isomorphic to the identity. The witness to $G \circ F$ being effectively isomorphic to the identity functor is a Turing functional $\Lambda_{\A}$ which gives, for any $\Atilde \in \Iso{\A}$, a map $\Lambda_{\A}^{\Atilde} \colon \Atilde \to G(F(\Atilde))$. Thus, applying the functor $F$, we get a map $F(\Lambda_{\A}^{\Atilde}) \colon F(\Atilde) \to F(G(F(\Atilde)))$. There is also a map $\Lambda_{\B}^{F(\Atilde)} \colon F(\Atilde) \to F(G(F(\Atilde)))$ which is obtained from the Turing functional $\Lambda_{\B}$ which witnesses that $F \circ G$ is effectively isomorphic to the identity functor. If these two maps $F(\Atilde) \to F(G(F(\Atilde)))$ agree for every $\Atilde \in \Iso{\A}$, and similarly with the roles of $\A$ and $\B$ switched, then we say that $F$ and $G$ are \emph{pseudo-inverses}.

\begin{definition} Two structures $\A$ and $\B$ with domain $\omega$ are \emph{computably} \emph{bi-transformable} if there exist computable functors $F\colon \Iso{\A} \rightarrow \Iso{\B}$ and $G\colon \Iso{\B} \rightarrow \Iso{\A}$ which are pseudo-inverses.
\end{definition}

\begin{theorem}
\label{thm:bi-interpretability}
Let $\A$ and $\B$ be countable structures. Then $\A$ and $\B$ are effectively bi-interpretable  iff $\A$ and $\B$ are computably bi-transformable.
\end{theorem}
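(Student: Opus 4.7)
The plan is to deduce both directions from Theorem~\ref{thm:basicequiv} and Proposition~\ref{prop:kuku}, with the pseudo-inverse condition on functors corresponding exactly to the u.r.i.\ computability of the compositions of interpretations required by Definition~\ref{defn:eff-biinterpretable}.

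For the forward direction, assume $\A$ and $\B$ are effectively bi-interpretable.  Applying Theorem~\ref{thm:basicequiv} to each of the two interpretations yields computable functors $F\colon \Iso{\A}\to\Iso{\B}$ and $G\colon\Iso{\B}\to\Iso{\A}$.  Inverting the two u.r.i.\ computable maps $f^\B_\A\circ\tilde f^\A_\B$ and $f^\A_\B\circ\tilde f^\B_\A$ from Definition~\ref{defn:eff-biinterpretable} produces Turing functionals witnessing natural isomorphisms $\Lambda_\A\colon\id\to G\circ F$ and $\Lambda_\B\colon\id\to F\circ G$ in the sense of Definition~\ref{def:effeq}.  It remains to verify the two triangle identities that make $F$ and $G$ pseudo-inverses; for instance, for $\Atilde\in\Iso{\A}$, one must show that $F(\Lambda_\A^{\Atilde})$ and $\Lambda_\B^{F(\Atilde)}$ coincide as isomorphisms $F(\Atilde)\to F(G(F(\Atilde)))$.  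Both maps ultimately come from the same syntactic pushforward: unwinding the construction of Theorem~\ref{thm:basicequiv}, $F$ sends a morphism to its coordinatewise action on tuples, and $\Lambda_\B$ is defined by the very same $\Sic_1$-formulas that define $\Lambda_\A$, just applied one level up inside the nested interpretations.

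For the backward direction, assume $F\colon\Iso{\A}\to\Iso{\B}$ and $G\colon\Iso{\B}\to\Iso{\A}$ are computable functors that are pseudo-inverses, with witnessing natural isomorphisms $\Lambda_\A\colon\id\to G\circ F$ and $\Lambda_\B\colon\id\to F\circ G$.  Theorem~\ref{thm:basicequiv} gives effective interpretations of $\B$ in $\A$ and of $\A$ in $\B$, together with their induced functors $\mathcal I^F$ and $\mathcal I^G$.  Proposition~\ref{prop:kuku} then provides computable natural isomorphisms $\Theta_F\colon F\Rightarrow\mathcal I^F$ and $\Theta_G\colon G\Rightarrow\mathcal I^G$.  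The crucial observation is that $\mathcal I^G(\mathcal I^F(\Btilde))$ is by construction precisely the double interpretation $\Dom{\B}{(\Dom{\A}{\B})}$ evaluated in $\Btilde$, equipped with its induced relations.  Hence the composition
\[
\Btilde\xrightarrow{\Lambda_\B^{\Btilde}}G(F(\Btilde))\xrightarrow{\Theta_G^{F(\Btilde)}}\mathcal I^G(F(\Btilde))\xrightarrow{\mathcal I^G(\Theta_F^{\Btilde})}\mathcal I^G(\mathcal I^F(\Btilde))
\]
is an isomorphism whose inverse is exactly the map $f^\A_\B\circ\tilde f^\B_\A$ demanded by Definition~\ref{defn:eff-biinterpretable}.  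Each arrow is uniformly computable from $\Btilde$ (the first from the pseudo-inverse data; the second from Proposition~\ref{prop:kuku}; the third because $\mathcal I^G$, being derived from a syntactic interpretation, acts coordinatewise on morphisms), so the composition is u.r.i.\ computable.  The analogous argument with the roles of $\A$ and $\B$ switched gives the other half of bi-interpretability.

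The main obstacle is bookkeeping: keeping track of which functor is $F$ and which is $G$, which direction each natural transformation runs, and—most delicately—the triangle identities in the forward direction.  Once one sees that all the relevant natural isomorphisms on both sides arise by applying a single syntactic recipe at different levels of the nested interpretation, these triangle identities become transparent, and the theorem reduces to two applications of Theorem~\ref{thm:basicequiv} glued together using Proposition~\ref{prop:kuku}.
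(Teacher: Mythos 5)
Your high-level plan is the same as the paper's---apply Theorem~\ref{thm:basicequiv} in both directions and translate between the u.r.i.\ computable compositions and the natural isomorphisms witnessing pseudo-inversiveness---but you have glossed over precisely the two places where the real work lives, and in both places the gaps are genuine.

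For the backward direction, your claim that ``each arrow is uniformly computable from $\Btilde$ \ldots\ so the composition is u.r.i.\ computable'' does not follow. Having a Turing functional that, from the diagram of each copy, produces \emph{some} relation is a Medvedev-style fact; u.r.i.\ computability additionally requires that the output be the canonical pushforward of a fixed automorphism-invariant relation, i.e., that the same $\Sic_1$-definition work across all copies. The distinction between these is the very Medvedev-vs-$\Sigma$-reducibility gap emphasized in the introduction. The paper closes it by showing the composite $\Theta^{\Atilde}$ satisfies $\Theta^{\Atilde}\circ j=\tilde{\tilde{j}}\circ\Theta^{\A}$ for every isomorphism $j$, so $\Theta^\A$ is automorphism-invariant, hence $\mathcal{L}_{\omega_1\omega}$-definable, and then combines this with the computability of $\Theta$ to invoke the Ash--Knight--Manasse--Slaman/Chisholm characterization. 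You also never verify that the two halves of the bi-interpretation (your map and its switched analogue) arise from a \emph{single} compatible pair $f^\A_\B$, $f^\B_\A$ as Definition~\ref{defn:eff-biinterpretable} demands. This is exactly where the pseudo-inverse hypothesis $F(\Lambda_\A^{\Atilde})=\Lambda_\B^{F(\Atilde)}$ is used in the paper, via Proposition~\ref{prop:char}(2); your sketch mentions pseudo-inverse data only in passing and never deploys it.

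For the forward direction, you correctly identify that $F(\Lambda_\A^{\Atilde})=\Lambda_\B^{F(\Atilde)}$ must be verified, but the assertion that ``these triangle identities become transparent'' because everything is ``the same syntactic pushforward'' does not constitute a proof. What the paper actually proves first is Proposition~\ref{prop:char}: the equivalence between the bi-interpretability condition stated with the fixed maps $f^\A_\B,f^\B_\A$, and an invariant form that holds for \emph{all} isomorphisms $\alpha\colon\Do_\A^\B\to\A$, $\beta\colon\Do_\B^\A\to\B$. The implication $(2)\Rightarrow(3)$ of that proposition, which makes the condition choice-independent, is a genuinely delicate argument (introducing $\delta$ with $\alpha'\circ\delta=\alpha$, showing $\tilde\gamma=\delta$, and using that u.r.i.\ computable graphs are fixed by automorphisms). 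Only with that invariance available can one plug in the particular $\alpha=\Gamma^{F(\Atilde)}$, $\beta=\Omega^{\Atilde}$ arising from the functorial constructions to conclude the pseudo-inverse identity. In short, Proposition~\ref{prop:char} is the missing key lemma in your proposal; without it, neither direction closes.

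Minor: the composition you display in the backward direction does not type-check ($\Lambda_\B^{\Btilde}$ lands in $F(G(\Btilde))$, not $G(F(\Btilde))$, and $F(\Btilde)$ is undefined for $\Btilde\in\Iso{\B}$ given your orientation of $F$), but that is repairable bookkeeping rather than a conceptual error.
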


 We prove Theorem~\ref{thm:bi-interpretability} in Section~\ref{sec:gener}.

\subsection*{Effective transformations of classes}
There has been much work in the last few decades analyzing which classes of structures can be reduced to others, and which are universal in the sense that the class of all structures reduces to them.
The meaning of ``reduces'' has varied.
The intuition is that one class reduces to another if every structure in the first class can be somehow encoded by a structure in the second class, and usually we want the encoding structure to have similar complexity as the structure being coded.
For instance, a class is {\em universal for degree spectra} if every degree spectrum realized by some structure is realized by a structure in the class.
The most celebrated paper in this direction was written by Hirschfeldt, Khoussainov, Shore and Slinko \cite{HKSS}.
They defined what it means for a class to be complete with respect to degree spectra of nontrivial structures, effective dimensions, expansion by constants, and degree spectra of relations.
Then they showed that  undirected graphs,  partial orderings,  lattices,  integral domains of arbitrary characteristic (and in particular rings), commutative semigroups,  and 2-step nilpotent groups are all complete in these sense.
Their definition is rather cumbersome and does not seem to be equivalent to our definitions below, but the definitions appear rather close in spirit.  

Our intention is to apply the proofs of Theorems
\ref{thm:basicequiv} and \ref{thm:bi-interpretability} to the situation in which one class $\CC$
of countable structures is effectively interpretable in another
class $\DD$.

In what will follow, a \emph{class} is  a category of countable structures upon the domain $\omega$ and morphisms are permutations of $\omega$ that induce isomorphisms, and we also assume our classes are closed under such isomorphisms.  (That is, if $\A$ and $\B$ are objects in the class,
every isomorphism between them is a morphism in the class.)
We can extend the definition of a computable functor to arbitrary classes (not necessarily of the form $\Iso{\A}$) by simply allowing the oracles of $\Phi$ and $\Phi_*$ to range over the objects and morphisms of an arbitrary class.

\begin{definition}
Say that a class $\CC$ is \emph{uniformly transformally reducible} to a
class $\DD$ there exist a subclass $\DD' $ of $\DD$ 
and computable functors $F\colon \CC \rightarrow \DD'$, $G\colon \DD' \rightarrow \CC$  such that $F$ and $G$ are pseudo-inverses.

\end{definition}

The syntactical counterpart of the above definition is:

\begin{definition}[\cite{MonICM}]
Say that a class $\CC$ is \emph{reducible via effective bi-interpretability} to a
class $\DD$ if for every $\C \in \CC$ there is a $\D \in \DD$ such that $\C$ and $\D$ are effectively bi-interpretable and furthermore the formulae defining the interpretations and the isomorphisms do not depend on the concrete choice of $\C$ or $\D$. 

\end{definition}

We have:

\begin{theorem}  A class $\CC$ is reducible via effective bi-interpretability to a
class $\DD$ iff $\CC$ is uniformly transformally reducible to a
class $\DD$.
\end{theorem}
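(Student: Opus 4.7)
The strategy is to observe that the constructions in Theorem \ref{thm:basicequiv}, Proposition \ref{prop:kuku}, and Theorem \ref{thm:bi-interpretability} are all uniform: they translate a fixed pair of Turing functionals $(\Phi,\Phi_*)$ into a fixed tuple of $\Dec_1$-formulas, and back, without ever consulting a specific structure. Because both definitions in the statement are determined by a single piece of finite data (uniform formulas on one side, fixed Turing functionals on the other), the class-level equivalence reduces to checking that uniformity survives these translations.

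For the forward direction, assume $\CC$ is reducible to $\DD$ via effective bi-interpretability, witnessed by $\Dec_1$-formulas $\bar\varphi$ that do not depend on $\C \in \CC$ or $\D \in \DD$. Feeding $\bar\varphi$ into the forward direction of Theorem \ref{thm:basicequiv} produces a single pair of Turing functionals computing a functor $F \colon \CC \to \DD'$, where $\DD' \subseteq \DD$ is the isomorphism-closure of the image of $F$, together with a single pair defining $G \colon \DD' \to \CC$. The $\Dec_1$-definable isomorphisms $\C \to G(F(\C))$ and $\D \to F(G(\D))$ guaranteed by bi-interpretability (that is, the u.r.i.\ computable composed interpretations of Definition \ref{defn:eff-biinterpretable}) yield, under the same uniform translation, Turing functionals $\Lambda_\CC$ and $\Lambda_\DD$ witnessing effective natural isomorphisms between the two compositions and the identity functors. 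The pseudo-inverse compatibility $F(\Lambda_\CC^{\C}) = \Lambda_\DD^{F(\C)}$ then expresses exactly the coherence between the two $\Dec_1$-definable isomorphisms, and is automatic from the uniformity of $\bar\varphi$.

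For the backward direction, let $F \colon \CC \to \DD'$ and $G \colon \DD' \to \CC$ be pseudo-inverse computable functors. Applying the backward direction of Theorem \ref{thm:basicequiv} separately to $F$ and to $G$ yields effective interpretations of $\D$ in $\C$ and of $\C$ in $\D$; since these interpretations are read off from the fixed functionals defining $F$ and $G$, the resulting formulas are independent of the particular $\C$ or $\D$. To upgrade this pair into a uniform bi-interpretation, I would use Proposition \ref{prop:kuku} to identify $F$ and $G$ (up to effective natural isomorphism) with the functors $\mathcal{I}^F$ and $\mathcal{I}^G$ built from their interpretations. Composing these effective isomorphisms with the pseudo-inverse witness $\Lambda_\CC$ produces an effective natural isomorphism between $\mathcal{I}^G \circ \mathcal{I}^F$ and the identity on $\CC$, and extracting its $\Dec_1$-definition yields the isomorphism in $(\C^{<\om})^{<\om}$ required by Definition \ref{defn:eff-biinterpretable}, uniformly in $\C$; the argument on the $\DD'$ side is symmetric.

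The main obstacle is the bookkeeping in the backward direction: one must verify that the effective natural isomorphism provided by Proposition \ref{prop:kuku} respects composition of functors, so that the pseudo-inverse compatibility of $(F,G)$ transfers cleanly to the composition $\mathcal{I}^G \circ \mathcal{I}^F$ of the associated interpretations. Once this naturality check is carried out, every Turing functional appearing in the argument depends only on the fixed data (interpretation formulas in one direction, functor functionals in the other), so uniformity across the class is automatic and the theorem follows from the single-structure results.
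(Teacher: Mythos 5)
Your proposal is correct and matches the paper's approach: the paper's entire proof is the one-line observation that the proof of Theorem~\ref{thm:bi-interpretability} (and hence Theorem~\ref{thm:basicequiv} and Proposition~\ref{prop:kuku} on which it rests) is uniform in both directions, exactly as you argue. Your extra worry about the backward-direction naturality check is already discharged inside the proof of Theorem~\ref{thm:bi-interpretability} itself, where the functional $\Theta$ is shown to be u.r.i.\ computable and the conditions of Proposition~\ref{prop:char} are verified uniformly, so no additional bookkeeping is needed.
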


\begin{proof}
The proof of Theorem~\ref{thm:bi-interpretability}
is uniform in both directions.
\end{proof}

Using the interpretations defined by Hirschfeldt, Khoussainov, Shore and Slinko \cite{HKSS02}, we get the following:
 undirected graphs, partial orderings, and lattices are {\em on top (or universal) for effective bi-interpretability} (see \cite[Section 5.2]{MonICM}).
If we add a finite set of constants to the languages of integral domains, commutative semigroups, or 2-step nilpotent groups, they become on top for effective bi-interpretability too.
A recent result by J. Park, B. Poonen, H. Schoutens, A. Shlapentokh, and one of us \cite{MPPSS} shows that fields are also universal for effective bi-interpretability.


\section{Proof of Theorem~\ref{thm:basicequiv}}
\label{sec:effective}
We split the proof into two propositions, one proposition for each direction of Theorem~\ref{thm:basicequiv}. We start by quickly disposing of the easy direction.

\begin{proposition}\label{pr1} If $\A$ is effectively  interpretable in $\B$, then there exists a computable functor from $\Iso{\B}$ to $\Iso{\A}$.
\end{proposition}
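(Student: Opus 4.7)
The plan is to turn the effective interpretation data directly into a pair $(\Phi,\Phi_*)$ witnessing a computable functor from $\Iso{\B}$ to $\Iso{\A}$. Fix the $\Dec_1$-definable sequence $(\Dom{\A}{\B},\sim,R_0,R_1,\ldots)$ from Definition \ref{def: eff int}, together with the isomorphism $f_\A^\B\colon \Dom{\A}{\B}/\!\sim\, \to \A$. Since every relation in this list is $\Dec_1$-definable, by the Ash--Knight--Manasse--Slaman / Chisholm characterization (recalled after Definition \ref{def:rice}), membership in $\Dom{\A}{\Bhat}$, in $\sim^{\Bhat}$, and in each $R_i^{\Bhat}$ is uniformly computable from $D(\Bhat)$, for every $\Bhat \in \Iso{\B}$. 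Uniformity is what matters here: the same indices of computable operators work for every copy $\Bhat$.

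To define $\Phi$, I would enumerate all tuples of $\Bhat^{<\om}$ in a fixed computable order and, using the oracle $D(\Bhat)$, maintain a list $\bar b_0,\bar b_1,\bar b_2,\ldots$ of tuples in $\Dom{\A}{\Bhat}$ that are pairwise $\sim^{\Bhat}$-inequivalent, adding each new tuple to the list precisely when it is not $\sim^{\Bhat}$-equivalent to any earlier $\bar b_m$. This yields a computable bijection between $\om$ and the $\sim^{\Bhat}$-equivalence classes of $\Dom{\A}{\Bhat}$. The atomic diagram of $F(\Bhat)$ is then read off by testing, for each tuple $(n_1,\ldots,n_{a(i)})$ of indices, whether $(\bar b_{n_1},\ldots,\bar b_{n_{a(i)}}) \in R_i^{\Bhat}$. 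Because the relations are closed under $\sim$, this produces a well-defined copy of $\A$ on domain $\om$; call the resulting operator $\Phi$.

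For morphisms, given an isomorphism $h\colon \Bhat \to \Btilde$, let $h^{<\om}\colon \Bhat^{<\om} \to \Btilde^{<\om}$ be the coordinate-wise extension. Because $\Dom{\A}{-}$, $\sim$, and the $R_i$ are defined by the same formulas in both structures, $h^{<\om}$ restricts to a $\sim$-respecting bijection between $\Dom{\A}{\Bhat}$ and $\Dom{\A}{\Btilde}$ and preserves each $R_i$. To compute $\Phi_*^{D(\Bhat)\oplus h\oplus D(\Btilde)}(n)$, run the enumeration of $\Phi$ on $\Bhat$ until the $n$-th representative $\bar b_n$ appears, compute $h^{<\om}(\bar b_n)$, and then search the analogous enumeration on $\Btilde$ for the unique index $m$ whose representative $\bar c_m$ satisfies $h^{<\om}(\bar b_n)\sim^{\Btilde} \bar c_m$; set $F(h)(n):=m$. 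This is clearly computable from the given oracle.

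It remains to check functoriality and computability clauses (N1), (N2), (C1), (C2). For (N1), if $h=\id_{\Bhat}$ then $h^{<\om}$ is the identity, so $\bar b_n = \bar c_n$ and $F(h)(n)=n$. For (N2), $(h_1\circ h_2)^{<\om}=h_1^{<\om}\circ h_2^{<\om}$, and composing the index-lookups in the three enumerations gives $F(h_1)\circ F(h_2)$; the appeal to $\sim$ being preserved makes this pass to the quotient cleanly. The conditions (C1) and (C2) are built into the construction. I do not foresee a genuine obstacle in this direction; the only point that requires care is to make the enumeration of equivalence-class representatives fully uniform in $\Bhat$, so that the same index $n$ refers to ``the $n$-th new tuple discovered'' in a way that is preserved under applying $h^{<\om}$ on the morphism side — but this is exactly what the search procedure above guarantees.
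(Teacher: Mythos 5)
Your proposal is correct and follows essentially the same route as the paper: both use the $\Dec_1$-definability to compute, uniformly in $D(\Bhat)$, a bijection between $\om$ and $\Dom{\Btilde}{\A}/\!\sim$, pull back the relations along it to get $F(\Bhat)$, and define $F(h)$ by conjugating $h^{<\om}$ with the two bijections; your explicit greedy enumeration of pairwise $\sim$-inequivalent representatives is simply a concrete way to produce the bijection the paper calls $\tilde{\tau}$. (One small notational slip: you write $\Dom{\A}{\B}$ where the paper's convention puts the interpreting structure in the first slot, i.e.\ $\Dom{\B}{\A}\subseteq\B^{<\om}$.)
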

\begin{proof}
Suppose that $\A$ is interpreted in $\B$ via $\Dom{\B}{\A}$, $\sim$, and $\langle R_i \rangle_{i \in \omega}$ as in Definition \ref{def: eff int}.
Given $\Btilde \in \Iso{\B}$, we first define $\Atilde = F(\Btilde)$ upon the domain $\omega$ as follows.
Notice that since the sequence of relations $\Dom{\B}{\A}$, $\sim$, and $\langle R_i \rangle_{i \in \omega}$ is $\Dec_1$ definable in $\B$, the respective interpretations in $\Btilde$ are uniformly computable from the open diagram $D(\Btilde)$ of $\Btilde$.
Since $\Btilde$ has domain $\om$, we have that $\Dom{\Btilde}{\A}\subseteq \om^{<\om}$ and using a fixed enumeration of $\om^{<\om}$ we get a bijection $\tilde{\tau}$:
\[
\tilde{\tau}\colon \om \to \Dom{\Btilde}{\A} / \sim.
\]
Note that $\tauti$ is uniformly computable from $D(\Btilde)$.
Using $\tauti$, we define relations $P_i$ on $\omega$ via the pull-back from $(\Dom{\Btilde}{\A}/\sim; R_0^{\Btilde},R_1^{\Btilde},...)$ along $\tilde{\tau}$, and let the resulting structure be $F(\Btilde)=\Atilde$. 

Also, given an isomorphism $f\colon \Btilde \rightarrow \Bhat$, we need to define an isomorphism $F(f)\colon F(\Btilde)\to F(\Bhat)$.
Using the respective bijections $\tilde{\tau}$ and $\hat{\tau}$  as above, 
and extending $f$ to the domain $\Btilde^{<\omega}$ in the obvious way,
we define 
\[
F(f) = \hat{\tau}^{-1} \circ f \circ \tilde{\tau}   \colon \Atilde \rightarrow \Ahat
\]
It is straightforward to check that the above definition of $F$ gives a functor from $\Iso{\B}$ to $\Iso{\A}$.
\end{proof}

We now move on to the more interesting direction.

\begin{proposition} \label{main propo}
Suppose there exists a computable functor from $\Iso{\B}$ to $\Iso{\A}$. Then $\A$ is effectively  interpretable in $\B$.
\end{proposition}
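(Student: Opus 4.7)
The plan is to use finite injective tuples from $B$ as ``reindexing instructions'' that let us compute specific elements of the output of $F$ in a uniform way. To each injective tuple $\bar{b}=(b_0,\ldots,b_{k-1})$ from $B$ associate the permutation $\alpha_{\bar{b}}$ of $\om$ defined by $\alpha_{\bar{b}}(b_i)=i$ for $i<k$ and sending $\om \setminus \{b_0,\ldots,b_{k-1}\}$ to $\{k,k+1,\ldots\}$ in increasing order; this is computable from $\bar{b}$ alone. For any $\Btilde \in \Iso{\B}$, let $\Btilde^{\bar{b}} := \alpha_{\bar{b}}(\Btilde)$, so that $\alpha_{\bar{b}}\colon \Btilde \to \Btilde^{\bar{b}}$ is an isomorphism whose inverse and whose action on the atomic diagram are both computable from $D(\Btilde)$ and $\bar{b}$. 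Applying conditions (C1)--(C2) of Definition~\ref{def:functors} then shows that both $D(F(\Btilde^{\bar{b}}))$ and the isomorphism $F(\alpha_{\bar{b}}^{-1})\colon F(\Btilde^{\bar{b}}) \to F(\Btilde)$ are uniformly computable from $D(\Btilde)$ and $\bar{b}$; in particular $F(\alpha_{\bar{b}}^{-1})(n)$ is a uniformly computable (not merely c.e.) function of $D(\Btilde)$, $\bar{b}$ and $n$.

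Take as the domain of the interpretation the set of (encoded) pairs $(\bar{b},n)$ where $\bar{b}$ is a nonempty injective tuple from $B$ and $n \in \om$, packaged as elements of $B^{<\om}$ via the scheme described in the footnote of Definition~\ref{def: eff int}. Define
\[
(\bar{b},n) \sim (\bar{c},m) \iff F(\alpha_{\bar{b}}^{-1})(n) = F(\alpha_{\bar{c}}^{-1})(m),
\]
and, for each relation symbol $P_i$ of $\A$ of arity $r$,
\[
R_i((\bar{b}_1,n_1),\ldots,(\bar{b}_r,n_r)) \iff F(\Btilde) \models P_i\bigl(F(\alpha_{\bar{b}_1}^{-1})(n_1),\ldots,F(\alpha_{\bar{b}_r}^{-1})(n_r)\bigr).
\]
By the previous paragraph each of these is uniformly computable from $D(\Btilde)$, so the whole sequence $(\Dom{\B}{\A},\sim,R_0,R_1,\ldots)$ is u.r.i.\ computable, i.e.\ $\Dec_1$-definable in $\B$. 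The relation $\sim$ is manifestly an equivalence relation and each $R_i$ is manifestly closed under it.

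To finish the interpretation, fix any isomorphism $\iota\colon F(\B) \to \A$ and send each $(\bar{b},n)$ to $\iota(F(\alpha_{\bar{b}}^{-1})(n)) \in A$. This descends modulo $\sim$ to a bijection $\Dom{\B}{\A}/{\sim} \to A$ that pulls back every $P_i^\A$ to $R_i/{\sim}$: surjectivity holds because, fixing any single $b_0 \in B$, the map $n \mapsto F(\alpha_{(b_0)}^{-1})(n)$ is already a bijection of $\om$ onto the domain of $F(\B)$, which $\iota$ then bijects onto $A$.

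The main conceptual point, and the thing one has to realize to make the argument work, is that the morphism part $\Phi_*$ of the functor lets us identify the element named by $(\bar{b},n)$ \emph{without} quantifying over copies of $\B$ extending $\bar{b}$; a naive approach via such a quantifier would at best yield a $\Pic_1$ condition, whereas the explicit relabelling by the computable permutation $\alpha_{\bar{b}}$ followed by $\Phi_*$ is genuinely computable in $D(\Btilde)$. The remaining technical care is in arranging the encoding of names into $B^{<\om}$ so that different encodings of the same conceptual pair all end up $\sim$-equivalent, and in checking that the action of $\alpha_{\bar{b}}$ on the diagram of $\Btilde$ is indeed computable (not just c.e.) from the oracle.
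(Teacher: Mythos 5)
There is a genuine gap. You claim that because $\sim$ and the $R_i$ are uniformly \emph{computable} from $D(\Btilde)$, they are u.r.i.\ computable and hence $\Dec_1$-definable. But computability from the full diagram is not sufficient: to be $\Dec_1$-definable without parameters, a relation on $\B^{<\om}$ must in particular be invariant under automorphisms of $\B$, and yours is not. The quantity $F(\alpha_{\bar{b}}^{-1})(n)$ genuinely depends on the global structure of $\Btilde$, not just on $\bar{b}$ up to automorphism. Concretely, take $\B=(\om;=)$, $\A=\B$, and $F$ the identity functor. Then $F(\alpha_{\bar{b}}^{-1})(n)=\alpha_{\bar{b}}^{-1}(n)$, so $((0),1)\sim((2),2)$ since $\alpha_{(2)}^{-1}(2)=1$; but applying the automorphism $h=(1\,2)$ maps this pair to $((0),1)$ and $((1),2)$, and $\alpha_{(1)}^{-1}(2)=2\neq 1$, so these are not $\sim$-related. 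Since $\sim$ is moved by an automorphism, it cannot be defined by any formula in the language of $\B$ without parameters, and the cited theorem of Ash--Knight--Manasse--Slaman and Chisholm simply does not apply (that theorem presupposes the relation is a relation on the abstract structure).

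The paper avoids this by imposing a nontrivial domain condition that you have dropped: $(\bar{b},i)\in\Dom{\B}{\A}$ requires $\Phi_*^{D(\bar{b})\oplus\lambda\upto|\bar{b}|\oplus D(\bar{b})}(i)\converges=i$, i.e.\ the convergence to $i$ is forced already by the \emph{finite} partial diagram of $\bar{b}$ and the finite partial permutation $\lambda\upto|\bar{b}|$. This is exactly what guarantees stability: if $k:\B^{\bar{b}}\to\B^{h(\bar{b})}$ is any isomorphism extending $\lambda\upto|\bar{b}|$ (which is what arises when you push $\bar{b}$ forward by an automorphism $h$), then the computation for $F(k)(i)$ has an oracle extending $D(\bar{b})\oplus\lambda\upto|\bar{b}|\oplus D(\bar{b})$, so $F(k)(i)=i$; that is precisely the equality your argument needs and does not have. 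With this restriction in place, $\sim$ must also be phrased as convergence on a finite oracle (the paper's definition with the tuple $\bar{d}$ and permutation $\sigma$), so that one gets a $\Sic_1$-definition for $\sim$ and for its complement, rather than mere computability from $D(\Btilde)$. Your relabelling idea via $\alpha_{\bar{b}}$ is essentially the same device the paper uses (viewing tuples as partial maps), but without the halting-on-a-finite-oracle requirement in the domain and in $\sim$, the resulting relations are presentation-dependent and the argument does not go through.
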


\begin{proof}

Let  $F = (\Phi,\Phi_*)$ be a computable functor from $\Iso{\B}$
into $\Iso{\A}$. We will produce $\Sigma^c_1$-formulas
for an effective interpretation of $\A$ in $\B$.
We begin by introducing some notation and conventions. We will then  define  $\Dom{\B}{\A}$ and $\sim$ formally and prove several useful lemmas about them. 
After that we define $R_i$ and show that our definitions suffice.

\bigskip

\noindent \textbf{Notations and conventions.} We identify a function $f\colon \omega \rightarrow \omega$ with its graph, using $\lambda$ to denote the identity function on $\omega$.
If $\bar{x} = (x_0, \ldots, x_n)$ and $\sigma$ is a permutation of $\{0,\ldots,n\}$, then $(\bar{x})_{\sigma}$ is the tuple $(x_{\sigma(0)},\ldots,x_{\sigma(n)})$. 
For $\bar{b} \in \B$ we view $\bbar$ as a partial map which takes the tuple $(0,\ldots,|\bar{b}|-1)$ to $(b_0,b_1,...,b_{|\bar{b}|-1})$. 
Viewing $\xbar$ as a partial map, note that $(\xbar)_\sigma = \xbar\circ \sigma$.
  
If $f$ is a map from $\omega$ to the domain of $\B$, then we can ``pull back'' the structure on $\B$ along $f$ to get a structure $\B_{f}$ on $\omega$ such that $f\colon \B_{f} \to \B$ is an isomorphism. 
 Given a tuple $\bar{b} \in \B$ and $f \supset {\bar{b}}$, we write $D(\bar{b})$ to denote the partial atomic diagram of $(0, 1, \ldots, |b|-1)$ in $\B_f$ that mentions only the first $|\bar{b}|$-many relations.
This partial atomic diagram will be typically identified with the finite binary string that, under some fixed G$\rm\ddot{o}$del numbering of the atomic formulas, encodes $D(\bar{b})$. 
Thus, $D(\bbar)$ is an initial segment of the atomic diagram $D(\B_f)$ of $\B_f$.
Furthermore, $D(\B_f)=\bigcup_{n\in\om} D(f\upto n)$.
Note that $D(\bar{b})$ does not really depend on a particular choice of $f$ as long as $f \supset {\bar{b}}$; it only depends on what atomic formulas hold of $\bbar$.

 Finally, for finite tuples $\bar{b}$ and $\bar{c}$, we write $\bar{c}\setminus \bar{b}$ for the set of elements that occur in $\bar{c}$ but not in $\bar{b}$.

\bigskip

\noindent \textbf{Definitions of $\mathbf{\Dom{\B}{\A}}$ and $\sim$.} Recall that $\B^{< \omega} \times \omega$ can be easily coded by elements of $\B^{<\omega}$.  
 We  define the domain $\Dom{\B}{\A}$ and the equivalence relation $\sim$ upon that domain as follows:

\smallskip

\begin{enumerate}
 \item[$\mathbf{\Dom{\B}{\A}}$:] Define $\Dom{\B}{\A}$ to be the set of pairs $(\bar{b},i) \in \B^{< \omega} \times \omega$ such that
\[ \Phi_*^{D(\bar{b}) \oplus \lambda\upto {|\bar{b}|} \oplus D(\bar{b})}(i) \downarrow = i. \]

\item[$\mathbf{\sim}$:]  For  $(\bar{b},i), (\bar{c},j) \in \Dom{\B}{\A}$, let $(\bar{b},i) \sim (\bar{c},j)$ if there exists a finite tuple $\bar{d}$ that does not mention elements from $\bar{b}$ and $\bar{c}$, such that if we let $\bar{c}'$ and $\bar{b}'$  list the elements in  $\bar{c}\setminus \bar{b}$ and $\bar{b}\setminus \bar{c}$ respectively, and let $\sigma$ be the finite permutation with $(\bar{b}\bar{c}' \bar{d}) = (\bar{c}\bar{b}' \bar{d})_\sigma$, then

$$ \Phi_*^{D(\bar{b}\bar{c}'\bar{d}) \oplus \sigma \oplus D(\bar{c}\bar{b}'\bar{d})} (i) \downarrow = j \text{ and } \Phi_*^{D(\bar{c}\bar{b}' \bar{d}) \oplus \sigma^{-1} \oplus D(\bar{b}\bar{c}'\bar{d})} (j) \downarrow = i.$$

\end{enumerate}

Intuitively, given $\bbar\subseteq f$, we have that $D(\bbar)\subseteq D(\B_f)$, and hence $\Phi^{D(\bbar)}$ is a finite initial segment of $\Phi^{D(\B_f)}$ which is isomorphic to $\A$.
The idea is that $(\bbar,i)$ will represent the $i$th element in the presentation $\Phi^{D(\B_f)}$ of $\A$.
Of course, there are many possible $f\colon\om\to\B$ extending $\bbar$, and the element $i$ on the different presentations $\Phi^{D(\B_f)}$  may correspond to different elements of $\A$.
As we will see later, the condition we are imposing to have $(\bar{b},i)\in \Dom{\B}{\A}$ will guarantee that this $i$th element always corresponds to the same element in $\A$.

The intuition behind $\sim$ is that the partial diagrams $D(\bar{b}\bar{c}'\bar{d})$, $D(\bar{c}\bar{b}'\bar{d})$, and the isomorphism
between them are enough information for $\Phi_*$ to recognize that
the element $i$ of $\Phi^{\B_{f}}$ should be paired with the element
$j$ of $\Phi^{\B_{g}}$ for any $f \supset {\bar{b}\bar{c}'\bar{d}}$ and $g \supset {\bar{c}\bar{b}'\bar{d}}$.
We note that $\sigma\subseteq g^{-1}\circ f\colon \B_f\to\B_g$.

\medskip

\noindent \textbf{Properties of $\mathbf{\Dom{\B}{\A}}$ and $\sim$.} Before we proceed, we verify that our definitions of $\Dom{\B}{\A}$ and $\sim$ satisfy the nice properties that one would expect from the ``right'' definitions of $\Dom{\B}{\A}$ and $\sim$. 

\begin{lemma}\label{lem:1} The set
$\Dom{\B}{\A}$  and its complement are both definable  in the language of $\B$ by $\Sigma^c_1$-formulas without parameters.
\end{lemma}
\begin{proof}
One can simply observe that $\Dom{\B}{\A}$ is u.r.i.\ computable in $\B$, and hence $\Dec_1$-definable without parameters.  However, let us also include a more syntactical proof to give the reader a better idea of what is going on.
We can enumerate the diagrams $D(\bbar)$ for which $\Phi_*^{D(\bbar)\oplus
\lambda\res |\bbar|\oplus D(\bbar)}(i)$ converges and is equal to $i$, and we can also compute the diagrams for which the computation diverges or does not equal~$i$. 
Each of these finite partial diagrams corresponds to a quantifier-free formula about~$\bar{b}$.
(Notice that here ``divergence'' does not mean that the computation runs forever;
indeed, $\Phi_*^{D(\B)\oplus\lambda\oplus D(\B)}$ must be total.
Rather, we say that the computation diverges on an input if it demands information
about $D(\B)$ or about $\lambda$ that the finite oracle does not include,
in which case we will recognize that the computation has diverged in this sense.  If it fails to diverge
in this sense, then it must in fact halt.)  Then $\Dom{\B}{\A}$ is defined by the computable disjunction of those formulas corresponding to diagrams where the computation converges and is equal to $i$, and its complement is defined by the disjunction of the other formulas (i.e., where the computation diverges or is not equal to $i$).

To ensure that the same computable disjunction works for every structure $\Btilde\in\Iso{\B}$,
we include in the disjunction \emph{every} finite string $\delta$ for which
$\Phi_*^{\delta\oplus\lambda\res k\oplus\delta}(i)\converges=i$
(where $k$ is the length of the tuple about which $\delta$ could be a fragment
of an atomic diagram).  After all, the functional $\Phi_*$ has no particular idea
which copy of $\B$ it has for its oracle.  Likewise, the computable disjunction defining
the complement of $\Dom{\B}{\A}$ includes every finite $\delta$ for which $\Phi_*^{\delta(i)\oplus\lambda\res k\oplus\delta}(i)$ either converges
to a value $\neq i$, or diverges by demanding more information than $\delta$ or
$\lambda\res k$ contains (as described above).  These are both $\Sigma_1^c$ disjunctions:
there may exist certain $\delta$ for which $\Phi_*^{\delta\oplus\lambda\res k\oplus\delta}(i)$
neither converges nor
demands too much information, but because $(\Phi,\Phi_*)$ is assumed to be a
computable functor, such a $\delta$ cannot be an initial segment of the atomic diagram
of any copy of $\B$.
\end{proof}

\begin{lemma}\label{lem:2}
The binary relation $\sim$ and its complement are both definable  in the language of $\B$ by $\Sigma^c_1$-formulae without parameters.
\end{lemma}

\begin{proof}

It is clear that $\sim$ has a $\Sigma^c_1$-definition (the same argument as in Lemma~\ref{lem:1}). 
We claim that the complement of $\sim$ also has a $\Sigma^c_1$-definition, but this has a  more complicated proof. 
Aiming for a definition of the complement of $\sim$ (and slightly abusing notations), 
we define a new binary relation $\nsim$ as follows. Let  $(\bar{b},i) \nsim (\bar{c},j)$ if there exist  $\dbar$ as in the definition of $\sim$ except that 
\[  
\Phi_*^{D(\bar{b}\bar{c}'\bar{d}) \oplus \sigma \oplus D(\bar{c}\bar{b}'\bar{d})} (i) \downarrow \not= j \text{ or } \Phi_*^{D(\bar{c}\bar{b}' \bar{d}) \oplus \sigma^{-1} \oplus D(\bar{b}\bar{c}'\bar{d})} (j) \downarrow \not= i.
\]
If we show that $\nsim$ is equal to the complement of $\sim$ (as the notation suggests) then we are done, since $\nsim$ clearly has a $\Sigma^c_1$-definition.
Thus, it is sufficient to prove that
for $(\bar{b},i), (\bar{c},j) \in \Dom{\B}{\A}$, we have exactly one of $(\bar{b},i) \nsim (\bar{c},j)$ and $(\bar{b},i) \sim (\bar{c},j)$.

First, we will show that at least one of $(\bar{b},i) \sim (\bar{c},j)$ or $(\bar{b},i) \nsim (\bar{c},j)$ holds. 
Let $\bar{b}'$ and $\bar{c}'$ be tuples consisting of the elements in $\bar{b}$ but not in $\bar{c}$, and in $\bar{c}$ but not in $\bar{b}$, respectively.
Let $\sigma$ be the map that matches the elements of $\bar{b}, \bar{c'}$ with their natural copies in $\cbar, \bbar'$, so that $(\bbar,\cbar')=(\cbar,\bbar')_\sigma$. 
Let $f, g\colon \om\to\B$ be bijections extending $\bar{b}, \bar{c'}$ and $\cbar, \bbar'$ respectively, and which coincide on all inputs $i\geq |\bar{b}, \bar{c'}|=|\cbar, \bbar'|$.
Thus, $h = g^{-1}\circ f$ is a permutation of $\om$ extending $\si$ which is constant on all inputs $i\geq |\si|$.
Recall that  $\B_f$ and $\B_g$ are the structures in $\Iso{\B}$ that we get by pulling back $f$ and $g$.
Observe that $h$ is an isomorphism from $\B_f$ to $\B_g$. 
Thus, by the choice of $\Phi_*$, we must have
\[ 
\Phi_*^{D(\B_f) \oplus h \oplus D(\B_g)} (i) \downarrow = j' \text{ and } \Phi_*^{D(\B_g) \oplus h^{-1} \oplus D(\B_f)} (j) \downarrow = i' 
\]
for some $i',j' \in \omega$.
Let us now consider an initial segment of these oracles where these computations still converge.
That is, for some $\dbar$ with $\bbar\cbar'\dbar\subset f$ and $\cbar\bbar'\dbar\subset g$, and for $\si'\supseteq\si$ so that $(\bbar\cbar'\dbar)=(\cbar\bbar'\dbar)_{\si'}$, we have 
\[ 
\Phi_*^{D(\bbar\cbar'\dbar) \oplus \sigma' \oplus D(\cbar\bbar'\dbar)} (i) \downarrow = j' \text{ and } \Phi_*^{D(\cbar\bbar'\dbar) \oplus {\sigma'}^{-1} \oplus D(\bbar\cbar'\dbar)} (j) \downarrow = i'. \]
If $i = i'$ and $j = j'$, 
we get $(\bar{b},i) \sim (\bar{c},j)$, and if either $i\neq i'$ or $j\neq j'$, we get $(\bar{b},i) \nsim (\bar{c},j)$.

\medskip{}

Second, we show that $(\bar{b},i) \sim (\bar{c},j)$ and $(\bar{b},i) \nsim (\bar{c},j)$ do not hold at the same time.  Suppose the contrary.
Let $\si$ and $\bar{d}_1$ witness that $(\bar{b},i) \sim (\bar{c},j)$, and $\tau$ and $\bar{d}_2$ witness that $(\bar{b},i) \nsim (\bar{c},j)$.  
Without loss of generality, we may assume
\[ \Phi_*^{D(\bar{b}\bar{c}'\bar{d}_1) \oplus \sigma \oplus D(\bar{c}\bar{b}'\bar{d}_1)}(i) = j \text{ but } \Phi_*^{D(\bar{b}\bar{c}'\bar{d}_2) \oplus \tau \oplus D(\bar{c}\bar{b}'\bar{d}_2)}(i) \neq j .\]

Choose bijective maps from $\om$ to $\B$ such that 
\[ 
f_1 \supset {\bar{b}\bar{c}'\bar{d}_1},
\quad
g_1 \supset {\bar{c}\bar{b}'\bar{d}_1},
\quad
f_2 \supset {\bar{b}\bar{c}'\bar{d}_2},
\quad
g_2 \supset {\bar{c}\bar{b}'\bar{d}_2}.
\] 
Then we have isomorphisms
\[ \xymatrix{
 			&   &   \B			\\
\B_{f_1} \ar[r]^{g_1^{-1} \circ f_1}   \ar@/_1pc/[rrrr]_{f_2^{-1} \circ f_1}   \ar@/^1pc/[urr]^{f_1}  &  \B_{g_1} \ar[rr]^{g_2^{-1} \circ g_1}   \ar@/^.5pc/[ur]^{g_1}     & &   \B_{g_2} \ar[r]^{f_2^{-1} \circ g_2}   \ar@/_.5pc/[ul]_{g_2}  & \B_{f_2}     \ar@/_1pc/[ull]_{f_2}.    \\
{\bar{b}\bar{c}'\bar{d}_1} &    \ar[l]^{(\cdot)_\si}   	{\bar{c}\bar{b}'\bar{d}_1}	 &  & {\bar{c}\bar{b}'\bar{d}_2}	  \ar[r]_{(\cdot)_\tau}	& {\bar{b}\bar{c}'\bar{d}_2}  
}\]
Since $F$ is a functor, we have
\[
 F(f_2^{-1} \circ f_1) = F(f_2^{-1} \circ g_2) \circ F(g_2^{-1} \circ g_1) \circ F(g_1^{-1} \circ f_1). 
 \]
Note that $g_2^{-1} \circ g_1 \supset \lambda \upto {|\bar{c}|}$ and $f_1^{-1} \circ f_2 \supset \lambda \upto {|\bar{b}|}$. 
Now, on the one hand, since $(\bar{b},i)$ and $(\bar{c},j)$ are in $\Dom{\B}{\A}$, we have:
\[ 
F(f_1^{-1} \circ f_2)(i) = \Phi_*^{\B_{f_1} \oplus (f_2^{-1} \circ f_1) \oplus \B_{f_2}}(i) = \Phi_*^{D(\bar{b}) \oplus \lambda\upto {|\bar{b}|} \oplus D(\bar{b})}(i) = i,
 \]
\[ 
F(g_2^{-1} \circ g_1)(j) = \Phi_*^{\B_{g_1} \circ (g_2^{-1} \circ g_1) \oplus \B_{g_2}}(j) = \Phi_*^{D(\bar{c}) \circ \lambda\upto {|\bar{c|}} \oplus D(\bar{c})}(j) = j .
\]
On the other hand, since $g_1^{-1} \circ f_1 \supset \sigma$ and $g_2^{-1} \circ f_2 \supset \tau$ we have:
\[ 
F(g_1^{-1} \circ f_1)(i) = \Phi_*^{\B_{f_1} \oplus (g_1^{-1} \circ f_1) \oplus \B_{g_1}}(i) = \Phi_*^{D(\bar{b}\bar{c}'\bar{d}_1) \oplus \sigma \oplus D(\bar{c}\bar{b}'\bar{d}_1)}(i) = j,
\]
\[ 
F(g_2^{-1} \circ f_2)(i) = \Phi_*^{\B_{f_2} \circ (g_2^{-1} \circ f_2) \oplus \B_{g_2}}(i) =  \Phi_*^{D(\bar{b}\bar{c}'\bar{d}_2) \oplus \tau \oplus D(\bar{c}\bar{b}'\bar{d}_2)}(i) \neq i .
\]
Composing the latter three equation lines, we get that $F(f_1^{-1} \circ f_2)(i)\neq j$,
contradicting the first line.
\end{proof}

\begin{lemma}\label{lem:eqr}
On its domain, $\Dom{\B}{\A}$, the relation $\sim$ is an equivalence relation.
\end{lemma}
\begin{proof}
It is evident that $\sim$ is symmetric (use $\sigma^{-1}$) and reflexive (since $(b,i) \in  \Dom{\B}{\A}$).
We show that $\sim$ is transitive. Suppose that $(\bar{a},i)$, $(\bar{b},j)$, and $(\bar{c},k)$ are in $\Dom{\B}{\A}$ and are such that $(\bar{a},i) \sim (\bar{b},j)$ and $(\bar{b},j) \sim (\bar{c},k)$.

Let $\bar{b}', \bar{a}', \bar{d}', \sigma$ witness $(\bar{a},i) \sim (\bar{b},j)$, and let $\bar{c}'', \bar{b}'', \bar{d}'', \tau$ witness $(\bar{b},j) \sim (\bar{c},k)$ (see the definition of $\sim$).
Let $\bar{c}'''$ be a string listing $\bar{c}\setminus \bar{a}$, and $\bar{a}'''$ be a string listing $\bar{a} \setminus \bar{c}$.
Choose bijections from $\om$ to $\B$ as follows:
\begin{align*}
f_1 & \supset {\bar{a}\bar{b}'\bar{d}'} & g_1 &\supset {\bar{b}\bar{c}''\bar{d}''}  & h_1 & \supset \bar{a}\bar{c}'''  \\
f_2 & \supset {\bar{b}\bar{a}'\bar{d}'} & g_2 &\supset {\bar{c} \bar{b}'' \bar{d}''} & h_2 & \supset \bar{c}\bar{a}'''
\end{align*}
where $h_1$ and $h_2$ agree outside the initial segment of length $|\bar{a}|+|\bar{c}'''| = |\bar{c}+\bar{a}'''|$.

\[ \xymatrix{
 		&	&   &   \B		& & 	\\
\B_{h_1} \ar[r]^{f_1^{-1} \circ h_1}   \ar@/_1.4pc/[rrrrrr]^{h_2^{-1} \circ h_1}   \ar@/^1.7pc/[urrr]^{h_1}       &  \B_{f_1} \ar[r]^{f_2^{-1} \circ f_1}   \ar@/^.8pc/[urr]^{f_1}          &  \B_{f_2} \ar[rr]^{g_1^{-1} \circ f_2}   \ar@/^.5pc/[ur]_{f_2}     &           &   \B_{g_1} \ar[r]^{g_2^{-1} \circ g_1}   \ar@/_.5pc/[ul]^{g_1}                        &   \B_{g_2} \ar[r]^{h_2^{-1} \circ g_2}   \ar@/_.8pc/[ull]_{g_2}          &         \B_{h_2}     \ar@/_1.7pc/[ulll]_{h_2}.    \\
{\bar{a}\bar{c}'''}   \ar@{<-}@/^1.4pc/[rrrrrr]_{(\cdot)_\rho} 	&{\bar{a}\bar{b}'\bar{d}'}	\ar@{<-}[r]_{(\cdot)_\si}  & {\bar{b}\bar{a}'\bar{d}'}	   &	  & {\bar{b}\bar{c}''\bar{d}''}		  & {\bar{c}\bar{b}''\bar{d}''}	\ar@{<-}[l]^{(\cdot)_{\tau}}	& {\bar{c}\bar{a}'''}   
}\]

Since ${F}$ is a functor, we have 
\[
F( h^{-1}_2 \circ h_1) = F(h_2^{-1}\circ g_2)\circ F(g^{-1}_2\circ g_1)\circ F(g^{-1}_1\circ f_2) \circ F(f^{-1}_2\circ f_1) \circ F(f_1^{-1}\circ h_1).
\]
Note also that $(\bar{b}, j) \in \Dom{\B}{\A}$ and $g^{-1}_1\circ f_2 \supset \lambda \upto {|\bar{b}|}$  imply  $F(g^{-1}_1\circ f_2)(j) = j$.
Similarly, $F(f_1^{-1}\circ h_1)(i) = i$ and $F(h_2^{-1}\circ g_2)(k) = k$.
We also have $F(f^{-1}_2\circ f_1)(i) = j$ and $F(g^{-1}_2\circ g_1)(j) = k$ by the choice of $f_1, f_2, g_1$ and $g_2$. Thus, 
$F(h^{-1}_2\circ h_1)(i) = k$
which must be witnessed by $\Phi_{*}^{ \B_{h_1}\oplus (h_2^{-1} \circ h_1) \oplus \B_{h_2}}(i) = k$. A symmetric argument shows 
$\Phi_*^{\B_{h_2}\oplus (h_1^{-1} \circ h_2) \oplus \B_{h_1}}(k) = i$. Now recall that $h_1$ and $h_2$ agree outside the initial segment of length $|\bar{a}|+|\bar{c}'''| = |\bar{c}|+|\bar{a}'''|$. Thus, for some long enough $\bar{e}$ and for  $\rho \subset h_2^{-1} \circ h_1$, the permutation mapping $\bar{a}\bar{c}''' \bar{e}$ to $\bar{a}''', \bar{c}''', \bar{e}$ we get a witness  for $(a,i) \sim (c,k)$. 
\end{proof}

The following two lemmas will be useful later. Their proofs are not difficult and can be skipped in a  first reading of the paper.

\begin{lemma}\label{lem:initial-segment}
For $(\bar{b},i) \in \Dom{\B}{\A}$, there is an initial segment $\bar{c} = \B\upto n$ of $\B$ and $j \in \omega$ such that $(\bar{b},i) \sim (\bar{c},j)$.
\end{lemma}

By $\B\upto n$ we mean the tuple that corresponds to $(0,1,....,n-1)$ in this given presentation $\B$.

\begin{proof}
Let $n$ be sufficiently large that $\bar{b} \in \B\upto n$. Let $\sigma$ be a permutation of $\{0,\ldots,n-1\}$ such that $\sigma(0,\ldots,|\bar{b}|-1) = \bar{b}$. Extend $\sigma$ to a permutation $f$ of $\omega$ by setting $f$ to be the identity on $\{n,n+1,\ldots\}$. Then let $j$ be such that
 $F(f)(i) = \Phi_*^{D(\B_f) \oplus f \oplus D(\B)}(i) = j. $
Since $F$ is a functor,
$ i=F(f^{-1})(j) = \Phi_*^{D(\B) \oplus f^{-1} \oplus D(\B_f)}(j). $
Let $m > n$ be such that these computations use only the first $m$ relation symbols and elements of $\B$. Let $\bar{c}' = f(|\bar{b}|,\ldots,m-1)$ and $\bar{c} = \B \upto m$. Then $\bar{b} \bar{c}'$ and $\bar{c}$ contain the same elements. Let $\tau = f\upto m$, so that $(\bar{b}\bar{c}') = (\bar{c})_{\tau}$. Then $(\bar{b},i) \sim (\bar{c},j)$ as witnessed by $\tau$.
\end{proof}

\begin{lemma}\label{lem:intial-segments-agreeing}
If  $(\bar{b},i)$ and $(\bar{c},j)$ are in $\Dom{\B}{\A}$, and $\bbar\subseteq \cbar$, then $(\bar{b},i) \sim (\bar{c},j)$ if and only if $i = j$.
\end{lemma}
\begin{proof}
Since $(\bar{b},i) \in \Dom{\B}{\A}$, we have
\[
\Phi_*^{D(\bar{b}) \oplus \lambda\upto {|\bbar|} \oplus D(\bar{b})}(i) = i.
\]
Let  $\bar{c}' = \cbar\sminus \bbar$ and let $\bar{d}$ and $\sigma\supset \lambda\upto {|\bbar|}$ witness that either $(\bar{b},i) \sim (\bar{c},j)$ or that $(\bar{b},i) \not\sim (\bar{c},j)$ as in the proof of Lemma \ref{lem:2}. 
Thus 
\[
\Phi_*^{D(\bar{b}\bar{c}'\bar{d}) \oplus \sigma \oplus D(\bar{c}\bar{d})}(i) = j', 
\]
for some $j'$, and $(\bar{b},i) \sim (\bar{c},j)$ hold if and only if $j=j'$.
But the oracle for this computation extends the oracle $D(\bar{b}) \oplus \lambda\upto {|\bbar|} \oplus D(\bar{b})$.
Therefore $j'=i$.
\end{proof}

\bigskip

\noindent \textbf{Defining the relations.} For each relation symbol $P_i$ of arity $p(i)$ in the language of $\A$ (recall that $p$ is a computable function), we define a relation $R_i$ on $\Dom{\B}{\A}$ as follows:

\begin{enumerate}
\item[$R_i$:] We let $(\bar{b}_1,k_1),\ldots,(\bar{b}_{p(i)},k_{p(i)})$ be in $R_i$ if there is a tuple $\bar{c}$ and $j_1,\ldots,j_{p(i)} \in \omega$ such that $(\bar{b}_s,k_s) \sim (\bar{c},j_s)$  for each $1 \leq s \leq p(i)$, and the atomic formula $P_i(j_1,\ldots,j_{p(i)})$ is true in $\Phi^{D(\bar{c})}$.

\end{enumerate}
We define a relation $Q_i$ the same way, except that $Q_i$ requires $P_i(j_1,\ldots,j_{a(i)})$
to be false in $\Phi^{D(\bar{c})}$.  (We will show $Q_i$ is the complement of $R_i$.)

Lemma~\ref{lem:2} combined with a standard argument (see, e.g., Lemma~\ref{lem:1}) imply that both $R_i$ and $Q_i$ are definable by a $\Sigma^c_1$ formula without parameters, and these formulae can be defined uniformly in $i$. 
Alternatively, it is not hard to see they are u.r.i.c.e.
The following lemma implies that $(R_i:i\in\om)$ is $\Dec_1$-definable without parameters. 

Fix $i$. We suppress $i$ in $R_i$, $Q_i$, and $p(i)$.

\begin{lemma}\label{lem:disjRQ}
$Q$ is the complement of $R$ in $\Dom{\B}{\A}$.
\end{lemma}
\begin{proof}
First, we need to show that each $(\bar{b}_1,i_1),\ldots,(\bar{b}_p,i_p)$ in $\Dom{\B}{\A}$ is either in $Q$ or in $R$. By Lemma \ref{lem:initial-segment} and Lemma~\ref{lem:intial-segments-agreeing}, for some sufficiently long initial segment $\bar{c}$ of the presentation $\B$, there are $j_1,\ldots,j_p$ such that $(\bar{b}_k,i_k) \sim (\bar{c},j_k)$ for each $1 \leq k \leq p$. Now $\Phi^{D(\B)}$ determines either that $(j_1,\ldots,j_p)$ is in $P$, or that it is not in $P$. By extending $\bar{c}$ to the use of this computation and using Lemma~\ref{lem:intial-segments-agreeing}, we get that $(\bar{b}_1,i_1),\ldots,(\bar{b}_p,i_p)$ is either in $Q$ or in $R$.

\medskip

We show that $(\bar{b}_1,i_1),\ldots,(\bar{b}_p,i_p)$ cannot be both in $Q$ and in $R$. Aiming for a contradiction, suppose  that there are $\bar{c}$ and $\bar{d}$, and $j_1,\ldots,j_p$ and $k_1,\ldots,k_p$, such that $(\bar{b}_m,i_m) \sim (\bar{c},j_m)$ and $(\bar{b}_m,i_m) \sim (\bar{d},k_m)$ for $1 \leq m \leq p$, and the atomic formula $P(j_1,\ldots,j_p)$ is in $\Phi^{D(\bar{c})}$, but $\neg P(k_1,\ldots,k_p)$ is not in $\Phi^{D(\bar{d})}$. 
 Note that by the transitivity of $\sim$, for each $m$ we have $(\bar{c},j_m) \sim (\bar{d},k_m)$.

Let $f \supset {\bar{c}}$ and $g \supset {\bar{d}}$ be permutations $\omega \to \B$. 
Then, since $\Phi^{D(\bar{c})}$ says that $P(j_1,\ldots,j_p)$ holds, and since $D(\bar{c})\subseteq D(\B_f)$, in $F(\B_f)$ the tuple $(j_1,\ldots,j_p)$ belongs to  $P^{F(\B_f)}$. 
Similarly, since $\Phi^{D(\bar{d})}$ says that $\neg P(k_1,\ldots,k_p)$, the tuple $(k_1,\ldots,k_p)$ is not in $P^{F(\B_g)}$.

The map $g^{-1} \circ f \colon \B_f \to \B_g$ is an isomorphism. 
With $(\bar{c},j_m) \sim (\bar{d},k_m)$ we must have $F(g^{-1} \circ f)(j_m)=k_m$,
since otherwise $(\bar{c},j_m)\not \sim (\bar{d},k_m)$ as in the proof of Lemma \ref{lem:2}.
So the isomorphism $F(g^{-1} \circ f) \colon F(\B_f) \to F(\B_g)$ maps $(j_1,\ldots,j_p)$ to $(k_1,\ldots,k_p)$, yielding a contradiction. 
\end{proof}
	
Thus,  for each relation symbol $P_i$ in the language of $\A$, we get a relation $R_i$ interpreting $P$ which is uniformly $\Delta^c_1$.
The corollary below follows from the proof of the previous lemma.

\begin{corollary}\label{cor:relations}
If $(\bar{b}_1,i_1),\ldots,(\bar{b}_p,i_p)$ and $(\bar{c}_1,j_1),\ldots,(\bar{c}_p,j_p)$ are all in $\Dom{\B}{\A}$, with $(\bar{b}_m,i_m) \sim (\bar{c}_m,j_m)$ for each $m$, then $(\bar{b}_1,i_1),\ldots,(\bar{b}_p,i_p)$ is in $R$ if and only if $(\bar{c}_1,j_1),\ldots,(\bar{c}_p,j_p)$ is in $R$.
\end{corollary}

\noindent \textbf{Defining an isomorphism.} We already know, from Lemma~\ref{lem:eqr}, that $\sim$ is an equivalence relation, and Corollary~\ref{cor:relations} says that $\sim$ agrees with our definition of $R_i$. Thus, $(\Dom{\B}{\A} / \sim; R_0/\sim,R_1/\sim,...)$ is a structure that can be viewed as a structure in the language of $\A$ (interpreting $P_i$ as $R_i/\sim$).
To finalize the proof, we need to define an isomorphism between 
\[
(\Dom{\B}{\A} / \sim; R_0/\sim,R_1/\sim,...)
\quad\mbox{and}\quad
\A= (A; P_0^\A,P_1^\A,\ldots).
\]
Using our fixed presentation $\B$, we define $\mathfrak{F}\colon A\to \Dom{\B}{\A}$ as follows:
Given $i\in \om= A$, let $\mathfrak{F}(i)= (\cbar,i)$ where $\cbar=\Bbar\upto n$ for the least $n\in\om$ such that $(\cbar,i)\in \Dom{\B}{\A}$.

\begin{lemma}\label{lem:isomorphism}
The function $\mathfrak{F} \colon\A\to \Dom{\B}{\A}$ defined above induces an isomorphism of 
 $(\Dom{\B}{\A} / \sim; R_0/\sim,R_1/\sim,...)$ onto  $(A; P_0^\A,P_1^\A,...).$
\end{lemma}
\begin{proof}
Lemma \ref{lem:intial-segments-agreeing} shows $\mathfrak{F}$ to be one-to-one.
Lemma~\ref{lem:initial-segment} shows it to be onto.
That it is an isomorphism follows directly from the definitions of $R_i$.
\end{proof}

This completes the proof of the proposition and thus of Theorem~\ref{thm:basicequiv}. \end{proof}

Abusing terminology, we will often refer to maps such as
$\mathfrak{F} \colon\A\to \Dom{\B}{\A}$ in Lemma \ref{lem:isomorphism}
as \emph{isomorphisms}, although in fact they only induce isomorphisms.
Likewise, a relation on $A\times \Do_\A^\B$ may be called an isomorphism
from $\A$ onto $\Do_\A^\B$
if it becomes one after modding out on the right by the equivalence $\sim$.
Finally, a composition of such ``isomorphisms'' may also be called
an isomorphism, as when we have maps between $\A$ and $\Do_\A^{\Do_B^\A}$.


\section{Effective uniqueness.}   \label{se:uniqueness}

 This section is devoted to a further analysis of Theorem~\ref{thm:basicequiv}. 
 We will prove Proposition \ref{prop:kuku}, which describes more explicitly what we actually get from the proof of Theorem~\ref{thm:basicequiv}.
Recall that Proposition \ref{prop:kuku} states that if $F\colon \Iso{\B} \to \Iso{\A}$ is a computable functor, then it is effectively isomorphic to $\I^F$, where $\I^F$ is the functor we get by transforming $F$ into an effective interpretation as in the proof of Proposition \ref{main propo} and then transforming it back into a computable functor using Proposition \ref{pr1}.

\begin{proof}[Proof of Proposition \ref{prop:kuku}]
For a presentation $\B$, set $\A=F(\B)$.  We will define 
\[
\Lambda^\B\colon F(\B) \to \I^F(\B).
\]
On the one hand, note that the map $\Fra \colon  F(\B) \to \Dom{\B}{\A}$ from Lemma \ref{lem:isomorphism} can be computed uniformly from a presentation of $\B$.
To be more explicit, we denote it by $\Fra^\B$. 
On the other hand, recall from the proof of Proposition \ref{pr1} that we build $\I^F(\B)$ out of the interpretation of $\A$ within $\B$ by pulling back through a bijection $\tau\colon \om\to \Dom{\B}{\A}$.
Let us call this bijection $\tau^\B$; it gives a well-defined
isomorphism from $\I^F(\B)$ to $\Dom{\B}{\A}/\sim$.
We define
\[
\Lambda^\B = (\tau^{\B})^{ -1} \circ \Fra^\B \colon F(\B)\to \I^F(\B).
\]
We need to show that $\Lambda$ is a natural isomorphism.
It is clear that $\Lambda(\B)$ is an isomorphism.
We must prove that, for all $\Btilde,\Bhat \in \Iso{\B}$ and all isomorphisms $h\colon \Btilde \to \Bhat$, the following diagram commutes.

\[
\xymatrix{
F(\Btilde)\ar[d]_{F(h)}  \ar[r]^{\Fra^{\Btilde}}    \ar@/^2pc/[rr]^{\Lambda^{\Btilde}}       & \Dom{\Btilde}{\A}\ar[d]_{h}        &     \mathcal{I}^{F}(\Btilde)\ar[d]^{\mathcal{I}^{F}(h)}   \ar[l]_{\tau^{\Btilde}} \\
F(\Bhat)   \ar[r]_{\Fra^{\Bhat}}       \ar@/_2pc/[rr]_{\Lambda^{\Bhat}}          & \Dom{\Bhat}{\A}   & \mathcal{I}^{F}(\Bhat)      \ar[l]^{\tau^{\Bhat}}
}\]

where $h\colon \Dom{\Btilde}{\A} \to \Dom{\Bhat}{\A}$ is the restriction of $h\colon \Btilde^{<\om} \to \Bhat^{<\om}$, which is the extension of $h\colon \Btilde \to \Bhat$.

The right-hand square commutes by definition of $\mathcal{I}^{F}(h)$. 
To show that the left-hand square commutes, take $i\in F(\Btilde)$ and $j=F(h)(i)\in F(\Bhat)$.
Let $(\abar,i)=\Fra^{\Btilde}(i)\in \Dom{\Btilde}{\A}$ and $(\bbar,j)=\Fra^{\Bhat}(j)\in \Dom{\Bhat}{\A}$.
We need to show that $h(\abar, i) \sim^{\Bhat} (\bbar,j)$.
Observe that $h(\abar,i)=(h(\abar),i)$.

With $\Phi_*^{D(\Btilde)\oplus h\oplus D(\Bhat)}(i)=j$,
we can make $\Phi_*^{D_{\Btilde}(\abar)\oplus h\upto|\abar|\oplus D_{\Bhat}(\bbar)}(i)=j$
by extending $\abar$ and $\bbar$.
Since $D_{\Btilde}(\abar)=D_{\Bhat}(h(\abar))$, we get that that $(h(\abar),i)\sim (\bbar,j)$ in $\Bhat$ as needed.
\end{proof}

\section{Proof of  Theorem \ref{thm:bi-interpretability}}\label{sec:gener}

Before proving Theorem \ref{thm:bi-interpretability}, we will prove the alternate characterization of bi-interpretations which is independent of the choice of $f^\A_\B$ and $f^\B_\A$. Throughout this section we will use the following convention. Given a map $h$ with domain $\A$, $h$ induces a map on tuples, and hence a map on $\Do_\B^\A$. We will denote this induced map by $\tilde{h}$, and the map induced on $\Do_\A^{\Do_\B^\A}$ by $\tilde{\tilde{h}}$. For example, if $h \colon \Do_\B^\A \to \A$ is a map, then $\tilde{h}$ is a map $\Do_\A^{(\Do_\B^\A)} \to \Do_{\B}^\A$.

\begin{proposition}\label{prop:char}
Let $\A$ and $\B$ be computable structures. Suppose that $\A$ is effectively interpretable in $\B$ and $\B$ is effectively interpretable in $\A$, and let $F$ and $G$ be the functors obtained from these interpretations. Then the following are equivalent.
\begin{enumerate}
	\item $\A$ and $\B$ are effectively bi-interpretable using the interpretations above.
	\item There are u.r.i.\ computable isomorphisms $g \colon \Do_\A^{(\Do_\B^\A)} \to \A$ and $h \colon \Do_\B^{(\Do_\A^\B)} \to \B$, along with isomorphisms $\alpha \colon \Do_\A^\B \to \A$ and $\beta \colon \Do_\B^\A \to \B$, such that $\alpha \circ \tilde{h} \circ \tilde{\tilde{\alpha}}^{-1} = g$ and $\beta \circ \tilde{g} \circ \tilde{\tilde{\beta}}^{-1} = h$.
	\item There are u.r.i.\ computable isomorphisms $g \colon \Do_\A^{(\Do_\B^\A)} \to \A$ and $h \colon \Do_\B^{(\Do_\A^\B)} \to \B$ such that, for all isomorphisms $\alpha \colon \Do_\A^\B \to \A$ and $\beta \colon \Do_\B^\A \to \B$, we have $\alpha \circ \tilde{h} \circ \tilde{\tilde{\alpha}}^{-1} = g$ and $\beta \circ \tilde{g} \circ \tilde{\tilde{\beta}}^{-1} = h$.
\end{enumerate}
In (2) and (3), one may always take $g$ and $h$ to be the u.r.i.\ computable maps
from the bi-interpretation in (1).
\end{proposition}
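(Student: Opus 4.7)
The plan is to prove the cycle (1) $\Rightarrow$ (3) $\Rightarrow$ (2) $\Rightarrow$ (1). The recurring tool is equivariance of u.r.i.\ computable maps: if $m$ is u.r.i.\ computable, then $\sigma \circ m = m \circ \tilde{\sigma}$ for every automorphism $\sigma$ of the ambient structure. (This holds because u.r.i.\ computable maps are $\Dec_1$-definable without parameters, hence invariant under all automorphisms.)

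For (1) $\Rightarrow$ (3), I take $g := g_0 = f^\B_\A \circ \tilde{f}^\A_\B$ and $h := h_0 = f^\A_\B \circ \tilde{f}^\B_\A$, which are u.r.i.\ computable by (1). The key telescoping identity $f^\B_\A \circ \tilde{h_0} \circ \tilde{\tilde{f^\B_\A}}^{-1} = g_0$ (obtained by expanding $\tilde{h_0} = \tilde{f^\A_\B} \circ \tilde{\tilde{f^\B_\A}}$ and cancelling) allows us, for any $\alpha = \sigma \circ f^\B_\A$ with $\sigma \in \mathrm{Aut}(\A)$, to compute
\[
\alpha \circ \tilde{h_0} \circ \tilde{\tilde{\alpha}}^{-1} \;=\; \sigma \circ \bigl(f^\B_\A \circ \tilde{h_0} \circ \tilde{\tilde{f^\B_\A}}^{-1}\bigr) \circ \tilde{\tilde{\sigma}}^{-1} \;=\; \sigma \circ g_0 \circ \tilde{\tilde{\sigma}}^{-1} \;=\; g_0,
\]
the last step being equivariance of $g_0$ under $\sigma$; the $\beta$-equation is symmetric. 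The implication (3) $\Rightarrow$ (2) is immediate.

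For (2) $\Rightarrow$ (1), given u.r.i.\ $g, h$ and witnesses $\alpha_0, \beta_0$, the same equivariance computation run in reverse upgrades (2) to (3), so specialising the equations at $\alpha = f^\B_\A$ and $\beta = f^\A_\B$ yields
\[
g = f^\B_\A \circ \tilde{h} \circ \tilde{\tilde{f^\B_\A}}^{-1}, \qquad h = f^\A_\B \circ \tilde{g} \circ \tilde{\tilde{f^\A_\B}}^{-1}.
\]
Substituting the second into the first and telescoping via the identity $\tilde{\tilde{g_0}} = \tilde{\tilde{f^\B_\A}} \circ \tilde{\tilde{\tilde{f^\A_\B}}}$ collapses the system to $g \circ \tilde{\tilde{g_0}} = g_0 \circ \tilde{\tilde{g}}$, for which $g = g_0$ is manifestly a solution. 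From here my plan is to show $g = g_0$ (and symmetrically $h = h_0$), whence the canonical maps are u.r.i.\ computable and (1) holds. Setting $\psi := g_0^{-1} \circ g \in \mathrm{Aut}(\Do_\A^{\Do_\B^\A})$ and $\xi := h_0^{-1} \circ h$, the relations rearrange to a coupled system of the form $\psi = \tilde{\tilde{f^\B_\A}} \circ \tilde{\xi} \circ \tilde{\tilde{f^\B_\A}}^{-1}$ and dually for $\xi$, and combining this with the equivariance of $g$ and $h$ should force $\psi = \xi = \mathrm{id}$.

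The main obstacle I expect is this last uniqueness step: extracting enough information from the equivariance of $g, h$ to kill the candidate automorphisms $\psi, \xi$. If a direct algebraic argument stalls, my backup is to produce a $\Dec_1$-definition of $g_0$ in $\A$ directly from the formulas defining $g, h$, and the interpretations---thereby establishing u.r.i.\ computability of $g_0$ without ever identifying $g$ with $g_0$ on the nose. The final remark of the proposition (that $g, h$ may be taken to be the bi-interpretation maps) falls out automatically from the (1) $\Rightarrow$ (3) construction above.
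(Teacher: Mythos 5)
Your cycle $(1)\Rightarrow(3)\Rightarrow(2)$ is correct, and in fact your argument for passing from ``some $\alpha$'' to ``all $\alpha$'' is cleaner than the one in the paper. The paper factors $\alpha = \alpha'\circ\delta$ with $\delta$ an automorphism of $\Do_\A^\B$, which forces them to prove the nontrivial claim that $\delta$ is induced by an automorphism of $\B$ (via the map $\gamma = h\circ\tilde\delta\circ h^{-1}$ and a separate argument that $\tilde\gamma=\delta$). You instead factor $\alpha = \sigma\circ\alpha_0$ with $\sigma = \alpha\circ\alpha_0^{-1}\in\mathrm{Aut}(\A)$, and since $g$ is u.r.i.\ computable in $\A$ its graph is automorphism-invariant, so $\sigma\circ g\circ\tilde{\tilde\sigma}^{-1}=g$ is immediate. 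That sidesteps the entire transfer lemma. Nice.

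The problem is $(2)\Rightarrow(1)$. Your plan is to show that $g$ must equal $g_0 = f^\B_\A\circ\tilde f^\A_\B$ (and $h=h_0$) for the \emph{a priori given} $f^\A_\B,f^\B_\A$, so that $g_0,h_0$ inherit u.r.i.\ computability from $g,h$. This is false, and no backup $\Dec_1$-definability argument for $g_0$ can save it, because $g_0$ genuinely need not be u.r.i.\ computable. Concretely: take $\A=\B=(\QQ,<)$ with the trivial interpretations ($\Do_\A^\B=\B$, $\sim$ equality, etc.), let $f^\B_\A=\mathrm{id}$ and $f^\A_\B\colon x\mapsto x+1$. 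Then $g_0=f^\A_\B$ is a nontrivial automorphism of $(\QQ,<)$, hence (by homogeneity) not $\Dec_1$-definable without parameters; but $(2)$ holds with $g=h=\mathrm{id}$ and $\alpha=\beta=\mathrm{id}$. Your collapsed equation $g\circ\tilde{\tilde g_0}=g_0\circ\tilde{\tilde g}$ reduces here to $\mathrm{id}\circ g_0 = g_0\circ\mathrm{id}$, a tautology that constrains nothing. The correct move — and the one the paper makes — is not to force $g=g_0$, but to \emph{choose a new} $f^\B_\A$: fix $f^\A_\B$ arbitrarily and set $f^\B_\A := g\circ(\tilde f^\A_\B)^{-1}$. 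Then $f^\B_\A\circ\tilde f^\A_\B = g$ is u.r.i.\ computable by hypothesis, and the equation $h=\beta\circ\tilde g\circ\tilde{\tilde\beta}^{-1}$ at $\beta=f^\A_\B$ gives $f^\A_\B\circ\tilde f^\B_\A = h$, which is also u.r.i.\ computable. The definition of effective bi-interpretability only requires \emph{some} choice of the maps $f$ compatible with the given interpretation formulas to have u.r.i.\ computable compositions; you were trying to prove the stronger — and false — statement that the given $f$'s already work.
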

\begin{proof}
(1)$\Rightarrow$(2). Suppose that $\A$ and $\B$ are effectively bi-interpretable; then the compositions
\[
f^\A_\B \circ \tilde{f}^\B_\A\colon \Do_\B^{(\Do_\A^\B)} \to \B 
\quad\mbox{ and }\quad
f^\B_\A \circ \tilde{f}^\A_\B\colon \Do_\A^{(\Do_\B^\A)} \to \A 
\]
are u.r.i.\ computable. Take $g = f^\B_\A \circ \tilde{f}^\A_\B$ and $h = f^\A_\B \circ \tilde{f}^\B_\A$. Let $\alpha = f^\B_\A$ and $\beta = f^\A_\B$. Then
\[ \alpha \circ \tilde{h} \circ \tilde{\tilde{\alpha}}^{-1} = f^\B_\A \circ \tilde{f}^\A_\B \circ \tilde{\tilde{f}}^\B_\A (\tilde{\tilde{f}}^\B_\A)^{-1} = f^\B_\A \circ \tilde{f}^\A_\B = g \]
and similarly $\beta \circ \tilde{g} \circ \tilde{\tilde{\beta}}^{-1} = h$.

(2)$\Rightarrow$(3). Let $g \colon \Do_\A^{(\Do_\B^\A)} \to \A$ and $h \colon \Do_\B^{(\Do_\A^\B)} \to \B$ be as in (2), with isomorphisms $\alpha \colon \Do_\A^\B \to \A$ and $\beta \colon \Do_\B^\A \to \B$ such that $\alpha \circ \tilde{h} \circ \tilde{\tilde{\alpha}}^{-1} = g$ and $\beta \circ \tilde{g} \circ \tilde{\tilde{\beta}}^{-1} = h$. Let $\alpha' \colon \Do_\A^\B \to \A$ and $\beta' \colon \Do_\B^\A \to \B$ be arbitrary isomorphisms. Let $\delta \colon \Do_\A^\B \to \Do_\A^\B$ be such that $\alpha' \circ \delta = \alpha$.
Then
\[ g = \alpha \circ \tilde{h} \circ (\tilde{\tilde{\alpha}})^{-1} = \alpha' \circ \delta \circ \tilde{h} \circ (\tilde{\tilde{\delta}})^{-1} \circ (\tilde{\tilde{\alpha'}})^{-1}. \] 
We claim that $\delta \circ \tilde{h} \circ (\tilde{\tilde{\delta}})^{-1} = \tilde{h}$, and hence that $g = \alpha' \circ \tilde{h} \circ (\tilde{\tilde{\alpha'}})^{-1}$. Using $h$, we can extend $\delta$ to an automorphism $\gamma=h\circ\tilde{\delta}\circ h^{-1}$ of $\B$, and we show below that $\tilde{\gamma}=\delta$. 
Now, since $h$ is u.r.i.\ computable, $\gamma(\Gamma_h) = \Gamma_h$ where $\Gamma_h$ is the graph of $h$. 
But this means that $\gamma \circ h \circ (\tilde{\tilde{\gamma}})^{-1} = h$. Taking tildes
of both sides then shows that $\delta \circ \tilde{h} \circ (\tilde{\tilde{\delta}})^{-1} = \tilde{h}$ as required.

To see that $\tilde{\gamma}=\delta$, notice that
$$ \text{id} = \gamma^{-1}\circ h\circ\tilde{\delta}\circ h^{-1}
=h\circ(\tilde{\tilde{\gamma}})^{-1}\circ\tilde{\delta}\circ h^{-1},$$
so $\tilde{\tilde{\gamma}}=\tilde{\delta}$.  
Let
$$ \ghat = 
(\alpha)^{-1}\circ g\circ\tilde{\tilde{\alpha}}: \Do_\A^{\Do_\B^{\Do_\A^\B}} \to \Do_\A^\B.$$
Now $\ghat$ must be u.r.i.\ computable in $\B$, since $g$ is (in $\A$) and since the structure of $\Do_\A^\B$ is $\Sigma_1^c$-defined in $\B$.
This yields
$$\tilde{\gamma} = \ghat\circ\tilde{\tilde{\tilde{\gamma}}}\circ(\ghat)^{-1}
= \ghat\circ\tilde{\tilde{\delta}}\circ(\ghat)^{-1} =\delta,
$$
since $\delta$ induces (via $\alpha$) an automorphism of $\A$, which fixes the graph $\Gamma_g$.

A similar argument shows that
$h=\beta' \circ \tilde{g} \circ (\tilde{\tilde{\beta'}})^{-1}$.

(3)$\Rightarrow$(1). Let $g \colon \Do_\A^{(\Do_\B^\A)} \to \A$ and $h \colon \Do_\B^{(\Do_\A^\B)} \to \B$ be as in (3). Fix an isomorphism $f^\A_\B \colon \Do_\B^\A \to \B$. Let $f^\B_\A \colon \Do_\A^\B \to \A$ be $g \circ (\tilde{f}^\A_\B)^{-1}$, so that $f^\B_\A \circ \tilde{f}^\A_\B = g$. Then
\[ h = f^\A_\B \circ \tilde{g} \circ (\tilde{\tilde{f}}^\A_\B)^{-1} 
= f^\A_\B \circ \tilde{f}^\B_\A \circ \tilde{\tilde{f}}^\A_\B \circ (\tilde{\tilde{f}}^\A_\B)^{-1} 
= f^\A_\B \circ \tilde{f}^\B_\A.\]
Thus $f^\B_\A \circ \tilde{f}^\A_\B$ and $f^\A_\B \circ \tilde{f}^\B_\A$ are u.r.i.\ computable.
\end{proof}

Recall theorem \ref{thm:bi-interpretability} that says that $\A$ and $\B$ are effectively bi-interpretable  iff $\A$ and $\B$ are computably bi-transformable.

 \begin{proof}[Proof of  Theorem \ref{thm:bi-interpretability}]
Suppose $\A$ and $\B$ are effectively bi-interpretable. From the interpretation of $\B$ in $\A$, we get a computable functor $F = (\Phi,\Phi_*)$ from $\Iso{\A}$ to $\Iso{\B}$ which arises by exactly the process described in the proof of Proposition \ref{pr1}. Recall again from the proof of Proposition \ref{pr1} that for each $\Atilde \in \Iso{\A}$ we build $F(\Atilde)$ out of the interpretation of $\B$ within $\A$ by pulling back through a bijection $\tilde{\tau}\colon \om\to \Dom{\A}{\B}$. Then $\tilde{\tau}$ is an isomorphism $F(\Atilde) \to \Dom{\A}{\B} / \sim$ and we remarked that it was given by a computable functional in $\Atilde$. So there is a computable functional $\Omega$ with $\Omega^{\Atilde} \colon \Dom{\Atilde}{\B} \to F(\Atilde)$ (note that $\Omega$ gives the inverse of $\tau$). Similarly, there is a computable functor $G=(\Psi,\Psi_{*})$ from $\Iso{\B}$ to $\Iso{\A}$ and a computable functional $\Gamma$ with $\Gamma^{\Btilde} \colon \Dom{\Btilde}{\A} \to G(\Btilde)$. We will show that $F$ and $G$ are pseudo-inverses. We begin by showing that $G \circ F \colon \Iso{\A} \to \Iso{\A}$ is effectively isomorphic to the identity functor.

The u.r.i.\ computable map $f_{\A}^{\B} \circ \tilde{f}_{\B}^{\A} \colon \Dom{\Dom{\A}{\B}}{\A} \to \A$ gives rise to a computable functional $\Theta$ which gives isomorphisms $\Theta^{\Atilde} \colon \Atilde \to \Dom{\Dom{\Atilde}{\B}}{\A}$. 

Given $\Atilde \in \Iso{\A}$, define $\Lambda^{\Atilde}$ as follows. We have the following maps:
\[\xymatrix{\Atilde\ar@/^2pc/[rr]^{\Theta^{\Atilde}}\ar@{-->}[d]_{F}\ar@{}[r]|-*[@]{\hookleftarrow} & \Dom{\Atilde}{\B}\ar@{}[r]|-*[@]{\hookleftarrow}\ar[dl]^{\Omega^{\Atilde}} & \Dom{\Dom{\Atilde}{\B}}{\A}\ar[dl]^{\widetilde{\Omega}^{\Atilde}}\\
F(\Atilde)\ar@{-->}[d]^{G}\ar@{}[r]|-*[@]{\hookleftarrow} & \Dom{F(\Atilde)}{\A}\ar[dl]^{\Gamma^{F(\Atilde)}}\\
G(F(\Atilde))
}\]
where $\widetilde{\Omega}^{\Atilde}$ is the extension of $\Omega^{\Atilde}$ to tuples.
Let $\Lambda^{\Atilde}$ be the composition
\[ \Lambda^{\Atilde} = \Gamma^{F(\Atilde)} \circ \widetilde{\Omega}^{\Atilde} \circ \Theta^{\Atilde}. \]
We will show that $\Lambda$ is the Turing functional which witnesses that $G \circ F$ is effectively isomorphic to the identity functor. We must show that the diagram from Definition \ref{def:effeq} commutes.

Now given $j: \Atilde \to \Ahat$, we have maps as shown in the following diagram (which has
not yet been seen to commute):
\[\xymatrix@C=0.75em{\Dom{\Dom{\Atilde}{\B}}{\A}\ar@{}[r]|-*[@]{\subseteq}\ar[dr]_{\widetilde{\Omega}^{\Atilde}} & \Dom{\Atilde}{\B}\ar@{}[r]|-*[@]{\subseteq}\ar[dr]^{\Omega^{\Atilde}} & \Atilde\ar@/_2pc/[ll]_{\Theta^{\Atilde}}\ar@{-->}[d]^{F}\ar[rr]^{j}\ar@/^2pc/[dd]^(.25){\Lambda^{\Atilde}}|\hole && \Ahat\ar@/_2pc/[dd]_(.75){\Lambda^{\Ahat}}|\hole\ar@/^2pc/[rr]^{\Theta^{\Ahat}}\ar@{-->}[d]_{F}\ar@{}[r]|-*[@]{\supseteq} & \Dom{\Ahat}{\B}\ar@{}[r]|-*[@]{\supseteq}\ar[dl]_{\Omega^{\Ahat}} & \Dom{\Dom{\Ahat}{\B}}{\A}\ar[dl]^{\widetilde{\Omega}^{\Ahat}}\\
 & \Dom{F(\Atilde)}{\A}\ar@{}[r]|-*[@]{\subseteq}\ar[dr]_{\Gamma^{F(\Atilde)}} & F(\Atilde)\ar@{-->}[d]^{G}\ar[rr]_{F(j)} && F(\Ahat)\ar@{-->}[d]_{G}\ar@{}[r]|-*[@]{\supseteq} & \Dom{F(\Ahat)}{\A}\ar[dl]^{\Gamma^{F(\Ahat)}}\\
 &  & G(F(\Atilde))\ar[rr]_{G(F(j))} && G(F(\Ahat))
}\]

By definition (see Proposition \ref{pr1}) we have that
\[ G(F(j)) = \Gamma^{F(\Ahat)} \circ \widetilde{F(j)} \circ (\Gamma^{F(\Atilde)})^{-1} \]
and
\[ F(j) = \Omega^{\Ahat} \circ \tilde{j} \circ (\Omega^{\Atilde})^{-1}. \]
Hence
\[G(F(j)) \circ \Gamma^{F(\Atilde)} \circ \widetilde{\Omega}^{\Atilde} = \Gamma^{F(\Ahat)} \circ \widetilde{\Omega}^{\Ahat} \circ \tilde{\tilde{j}}. \]
Also, since $\Theta$ is u.r.i.\ computable on $\A$, for any isomorphism $j: \Atilde \to \Ahat$, we have that $\tilde{\tilde{j}} \circ \Theta^{\Atilde} = \Theta^{\Ahat} \circ j$. Hence
\[G(F(j)) \circ \Gamma^{F(\Atilde)} \circ \widetilde{\Omega}^{\Atilde} \circ \Theta^{\Atilde} = \Gamma^{F(\Ahat)} \circ \widetilde{\Omega}^{\Ahat} \circ \Theta^{\Ahat} \circ j. \]
Using the definition of $\Lambda^{\Atilde}$, we have
\[G(F(j)) \circ \Lambda^{\Atilde} = \Lambda^{\Ahat} \circ j.\]
Thus $G \circ F$ is effectively isomorphic to the identity functor via $\Lambda$.

By a similar argument, $F \circ G$ is effectively isomorphic to the identity functor. Denote the $\Lambda$ obtained for $G \circ F$ as $\Lambda_{\A}$, and that for $F \circ G$ as $\Lambda_{\B}$. Let $\Upsilon$ be the Turing functional which arises from the u.r.i.\ computable isomorphism $f_{\B}^{\A} \circ \tilde{f}_{\A}^{\B}$,
so $\Upsilon^\B\colon \B\to \Dom{\Dom{\B}{\A}}{\B}$. Then
\[ \Lambda_{\B}^{\Btilde} = \Omega^{G(\Btilde)} \circ \widetilde{\Gamma}^{\Btilde} \circ \Upsilon^{\Btilde}. \]
Then
\[ F(\Lambda_\A^{\Atilde}) = \Omega^{G(F(\Atilde))} \circ \widetilde{\Gamma}^{F(\Atilde)} \circ \widetilde{\widetilde{\Omega}}^{\Atilde} \circ \widetilde{\Theta}^{\Atilde} \circ (\Omega^{\Atilde})^{-1}. \]
Now by Proposition \ref{prop:char} with $h^{-1}=\Upsilon^{F(\Atilde)}$, 
$g^{-1}=\Theta^{\Atilde}$, and $\beta=\Omega^{\Atilde}$, we have:
\[ \Upsilon^{\F(\Atilde)} = \widetilde{\widetilde{\Omega}}^{\Atilde} \circ \widetilde{\Theta}^{\Atilde} \circ (\Omega^{\Atilde})^{-1} \]
and so
\[ F(\Lambda_\A^{\Atilde}) = \Lambda_{\B}^{F(\Atilde)}. \]
A similar argument shows that
\[ G(\Lambda_\B^{\Btilde}) = \Lambda_{\A}^{G(\Btilde)}. \]

\medskip{}

Now suppose that we have computable functors $F$ and $G$ which give a computable bi-transformation between $\A$ and $\B$. Let $\Lambda^{\Atilde} \colon \Atilde \to G(F(\Atilde)) $ witness that $G \circ F$ is effectively isomorphic to the identity. From $F$ and $G$ we get interpretations of $\A$ in $\B$ and of $\B$ in $\A$, and Turing functionals $\Omega$ and $\Gamma$ as before. For any $\Atilde \in \Iso{\A}$, we get an isomorphism
\[ \Theta^{\Atilde} = (\widetilde{\Omega}^{\Atilde})^{-1} \circ (\Gamma^{F(\Atilde)})^{-1} \circ \Lambda^{\Atilde} \colon \Atilde \to \Dom{\Dom{\Atilde}{\B}}{\A}. \]
We can view $\Theta^{\Atilde}$ as a subset of $\Atilde \times \Dom{\Dom{\Atilde}{\B}}{\A}$. 

First, let $j: \A \to \Atilde$ be any isomorphism. We show that the graph of $\Theta^{\Atilde}$ is the image, under $j$, of the graph of $\Theta^{\A}$, i.e.\ that $\Theta^{\Atilde} \circ j = \tilde{\tilde{j}} \circ \Theta^{\A}$. This is very similar to the argument above. By the properties of $\Lambda$ we have
\[ \Lambda^{\Atilde} \circ j = G(F(j)) \circ \Lambda^{\A}  = \Gamma^{F(\Atilde)} \circ \widetilde{\Omega}^{\Atilde} \circ\tilde{\tilde{j}} \circ (\widetilde{\Omega}^{\A})^{-1} \circ (\Gamma^{F(\A)})^{-1} \circ \Lambda^{\A}.\]
Then
\[ (\widetilde{\Omega}^{\Atilde})^{-1} \circ (\Gamma^{F(\Atilde)})^{-1} \circ \Lambda^{\Atilde} \circ j = \tilde{\tilde{j}} \circ (\widetilde{\Omega}^{\A})^{-1} \circ (\Gamma^{F(\A)})^{-1} \circ \Lambda^{\A} \]
which gives
$\Theta^{\Atilde} \circ j = \tilde{\tilde{j}} \circ \Theta^{\A}.$

This argument shows first that $\Theta^\A$ is fixed under automorphisms $j\colon\A\to\A$,
hence $\mathcal{L}_{\omega_1\omega}$-definable.  The same argument also shows
(with $j\colon\A\to\Atilde$ any isomorphism) that the same formula also
defines $\Theta^{\Atilde}$.  
But $\Theta$ is a Turing functional, so membership in $\Theta^{\Atilde}$ is always computable
below $\Atilde$, and so $\Theta^\A$ is u.r.i.\ computable.

\comment{
We claim that $\Theta^{\A} \subseteq \A \times \Dom{\Dom{\A}{\B}}{\A}$ is u.r.i.\ computable. This follows from two claims: first, that $\Theta^{\A}$ is $\mathcal{L}_{\omega_1 \omega}$-definable, and second that for all $\Atilde \in \Iso{\A}$, the graph of $\Theta^{\Atilde}$ is defined by the same formula. By a theorem of Ash, Knight, Manasse, and Slaman \cite{AKMS89} and Chisholm \cite{Chi90}, since $\Theta^{\Atilde}$ is always computable uniformly in $\Atilde$, it suffices to show that $\Theta^{\A}$ is uniformly u.r.i.\ computable.

We have to show that for any $\Atilde \in \Iso{\A}$, $\Theta^{\Atilde}$ is defined by the same formula as $\Theta^{\A}$. 

Now taking $\Atilde = \A$ above and $j: \A \to \A$ an automorphism of $\A$, we get
\[ \Theta^{\A} \circ j = j \circ \Theta^{\A}. \]
That is, $\Theta^{\A}$ is fixed by automorphisms of $\A$, and hence $\mathcal{L}_{\omega_1 \omega}$-definable.
}

A similar argument works to define $\Upsilon^{\Btilde} \colon \Btilde \to \Dom{\Dom{\Btilde}{\A}}{\B}$. Let $\Lambda_{\A}^{\Atilde} \colon \Atilde \to G(F(\Atilde))$ now denote the Turing functional which witnesses that $G \circ F$ is effectively isomorphic to the identity, and let $\Lambda_{\B}^{\Btilde} \colon \Btilde \to F(G(\Btilde))$ denote the Turing functional which witnesses that $F \circ G$ is effectively isomorphic to the identity. We claim that (2) of Proposition 
 \ref{prop:char} is satisfied by 
$h^{-1}=\Upsilon^{F(\Atilde)}$, 
$g^{-1}=\Theta^{\Atilde}$, 
$\alpha=\Gamma^{F(\Atilde)}$, and $\beta=\Omega^{\Atilde}$.

We have
\[ \Upsilon^{F(\Atilde)} = (\widetilde{\Gamma}^{F(\Atilde)})^{-1} \circ (\Omega^{G(F(\Atilde))})^{-1} \circ \Lambda_{\B}^{F(\Atilde)}.\]
Then
\[ \widetilde{\Omega}^{\Atilde} \circ \widetilde{\Theta}^{\Atilde} \circ (\Omega^{\Atilde})^{-1} = (\widetilde{\Gamma}^{F(\Atilde)})^{-1} \circ \widetilde{\Lambda}^{\Atilde}_\A \circ (\Omega^{\Atilde})^{-1}.\]
Now
\[ F(\Lambda^{\Atilde}_\A) = \Omega^{G(F(\Atilde))} \circ \widetilde{\Lambda}^{\Atilde}_\A \circ (\Omega^{\Atilde})^{-1} \]
and so
\[ \Omega^{\Atilde} \circ \widetilde{\Theta}^{\Atilde} \circ (\Omega^{\Atilde})^{-1} = (\widetilde{\Gamma}^{F(\Atilde)})^{-1} \circ (\Omega^{G(F(\Atilde))})^{-1} \circ F(\Lambda^{\Atilde}_\A).\]
Since $F(\Lambda^{\Atilde}_\A) = \Lambda^{F(\Atilde)}_\B$, $\Omega^{\Atilde} \circ \widetilde{\Theta}^{\Atilde} \circ (\Omega^{\Atilde})^{-1} = \Upsilon^{F(\Atilde)}$. Similarly, we get that $\Gamma^{\Btilde} \circ \widetilde{\Upsilon}^{\Btilde} \circ (\Gamma^{\Btilde})^{-1} = \Theta^{\Atilde}$. By Proposition \ref{prop:char}, we get a bi-interpretation.
\end{proof}

\bibliographystyle{alpha}
\bibliography{References}

\newcommand{\etalchar}[1]{$^{#1}$}
\def\cprime{$'$} \def\cprime{$'$}
\begin{thebibliography}{HKSS02b}

\bibitem[AKMS89]{AKMS89}
Chris Ash, Julia Knight, Mark Manasse, and Theodore Slaman.
\newblock Generic copies of countable structures.
\newblock {\em Ann. Pure Appl. Logic}, 42(3):195--205, 1989.

\bibitem[Chi90]{Chi90}
John Chisholm.
\newblock Effective model theory vs.\ recursive model theory.
\newblock {\em J. Symbolic Logic}, 55(3):1168--1191, 1990.

\bibitem[Ers96]{Ershov96}
Yuri~L. Ershov.
\newblock {\em Definability and computability}.
\newblock Siberian School of Algebra and Logic. Consultants Bureau, New York,
  1996.

\bibitem[HKSS02a]{HKSS}
Denis~R. Hirschfeldt, Bakhadyr Khoussainov, Richard~A. Shore, and Arkadii~M.
  Slinko.
\newblock Degree spectra and computable dimensions in algebraic structures.
\newblock {\em Ann. Pure Appl. Logic}, 115(1-3):71--113, 2002.

\bibitem[HKSS02b]{HKSS02}
Denis~R. Hirschfeldt, Bakhadyr Khoussainov, Richard~A. Shore, and Arkadii~M.
  Slinko.
\newblock Degree spectra and computable dimensions in algebraic structures.
\newblock {\em Ann. Pure Appl. Logic}, 115(1-3):71--113, 2002.

\bibitem[Hod93]{Hodges93}
Wilfrid Hodges.
\newblock {\em Model theory}, volume~42 of {\em Encyclopedia of Mathematics and
  its Applications}.
\newblock Cambridge University Press, Cambridge, 1993.

\bibitem[Kal09]{Kal09}
I.~Sh. Kalimullin.
\newblock Relations between algebraic reducibilities of algebraic systems.
\newblock {\em Izv. Vyssh. Uchebn. Zaved. Mat.}, 53(6):71--72, 2009.

\bibitem[Mar02]{MarkerBook}
David Marker.
\newblock {\em Model theory}, volume 217 of {\em Graduate Texts in
  Mathematics}.
\newblock Springer-Verlag, New York, 2002.
\newblock An introduction.

\bibitem[MK08]{MK08}
Andrei~S. Morozov and Margarita~V. Korovina.
\newblock On {$\Sigma$}-definability without equality over the real numbers.
\newblock {\em MLQ Math. Log. Q.}, 54(5):535--544, 2008.

\bibitem[Mon]{MonICM}
Antonio Montalb\'an.
\newblock Computability theoretic classifications for classes of structures.
\newblock To appear in the Proccedings of the ICM 2014.

\bibitem[Mon12]{MonRice}
Antonio Montalb{\'a}n.
\newblock Rice sequences of relations.
\newblock {\em Philos. Trans. R. Soc. Lond. Ser. A Math. Phys. Eng. Sci.},
  370(1971):3464--3487, 2012.

\bibitem[Mon13]{MonFixed}
Antonio Montalb\'an.
\newblock A fixed point for the jump operator on structures.
\newblock {\em Journal of Symbolic Logic}, 78(2):425--438, 2013.

\bibitem[MPP{\etalchar{+}}]{MPPSS}
R.~Miller, J.~Park, B.~Poonen, H.~Schoutens, and A.~Shlapentokh.
\newblock A computable functor from graphs to fields.
\newblock To appear.

\bibitem[Puz09]{Puz09}
V.~G. Puzarenko.
\newblock On a certain reducibility on admissible sets.
\newblock {\em Sibirsk. Mat. Zh.}, 50(2):415--429, 2009.

\bibitem[Stu13]{StuEMU}
Alexey Stukachev.
\newblock Effective model theory: an approach via {$\Sigma$}-definability.
\newblock In {\em Effective mathematics of the uncountable}, volume~41 of {\em
  Lect. Notes Log.}, pages 164--197. Assoc. Symbol. Logic, La Jolla, CA, 2013.

\end{thebibliography}

\end{document}